\newtheorem{theorem}{Theorem}[section]
\newtheorem{lemma}[theorem]{Lemma}
\newtheorem{cor}[theorem]{Corollary}
\makeatletter \@addtoreset{equation}{section} \makeatother
\newcommand{\pp}{\partial_+}
\newcommand{\pn}{\partial_-}
\def\tilde{\widetilde}
\def\hatN{\widehat N}
\newcommand{\si}{\sigma}
\newcommand{\RR}{\mathbb{R}}
\newcommand{\beq}{\begin{equation}}
\newcommand{\eeq}{\end{equation}}
\newcommand{\eps}{\varepsilon}
\def\TS{\textstyle}
\def\com#1{\quad{\textrm{#1}}\quad}
\def\eq#1{(\ref{#1})}
\def\nn{\nonumber}
\def\ol{\overline}
\def\ora#1{{\overrightarrow{#1}}\!}
\def\ola#1{{\overleftarrow{#1}}\!}
\def\mc{\mathcal}
\def\bint{\mathop{\mathrel{\nwarrow}\!\!\!\!\!\!\mathrel{\int}}}
\def\fint{\mathop{\mathrel{\nearrow}\!\!\!\!\!\!\mathrel{\int}}}
\def\Lr{{\ora{\mc L}}}
\def\Ll{{\ola{\mc L}}}
\def\x#1{$#1\times#1$}
\def\dbyd#1{\TS{\frac{\partial}{\partial#1}}}
\begin{document}

\title{Shock formation in the compressible Euler equations and related
  systems}

\author{Geng Chen\thanks{Department of Mathematics, Pennsylvania State
University, University Park, PA 16802 ({\tt chen@math.psu.edu}).}
\and Robin Young\thanks{Department of Mathematics and Statistics,
University of Massachusetts, Amherst, MA 01003 ({\tt
young@math.umass.edu}).  Supported in part by NSF Applied
Mathematics Grant Number DMS-0908190.}
\and Qingtian Zhang\thanks{Department of Mathematics,
Pennsylvania State University, University Park, PA 16802 ({\tt zhang\_q@math.psu.edu}).}
}

\date{}
\maketitle

\begin{abstract}
  We prove shock formation results for the compressible
  Euler equations and related systems of conservation laws in one
  space dimension, or three dimensions with spherical
  symmetry.  We establish an $L^\infty$ bound for $C^1$
  solutions of the one-D Euler equations, and use this to improve recent
  shock formation results of the authors.  We prove analogous shock
  formation results for one-D MHD with orthogonal magnetic field, and
  for compressible flow in a variable area duct, which has as a
  special case spherically symmetric three dimensional flow on the
  exterior of a ball.
\end{abstract}

2010\textit{\ Mathematical Subject Classification:} 76N15, 35L65, 35L67.

\textit{Key Words:} Conservation laws, singularity formation,
compressible Euler equations, MHD, large data.

\section{Introduction}

One of the defining features of systems of hyperbolic conservation
laws,
\beq
  u_t+f(u)_x=0, \quad (x,t)\in\RR\times\RR^+,\ u\in\RR^n,
\label{CL}
\eeq in the absence of regularizing dissipative effects such as
viscosity and heat loss, is the blowup of gradients and subsequent
formation of shock waves.  
Once a shock forms, discontinuous weak
solutions must be considered, and the corresponding loss of regularity
makes the systems hard to analyze.

We are concerned with the conditions under which a shock forms in a
classical $C^1$ solution of \eq{CL}.  For pairs of equations ($n=2$),
Lax showed in~\cite{lax2,lax3} that shocks always form for data which
is anywhere compressive.  In work initiated by John, this was extended
to larger systems, given restrictions on the size of the initial
data~\cite{Fritz John,Li daqian,Liu1}.  In particular, all waves in
those solutions are assumed to be weak.  One of our goals is to remove
these severe restrictions on the initial data, which includes allowing
for large waves in the solution.  We restrict our attention to systems
arising from physical problems because of the variety of unstable
phenomena which can affect general \x3 systems when the data is
allowed to be large~\cite{Jens,JY,Yblowup1,Ynonuniq,szY}.

Recently, the authors have obtained large data results for the \x3
compressible Euler equations~\cite{G3,G4}.  In all of these works,
\eq{CL} is differentiated and manipulated to get a system of PDEs with
quadratic nonlinearities for the evolution of gradients along
characteristics.  For two equations, these PDEs decouple to yield a
pair of Riccati type ODEs along characteristics; the quadratic
nonlinearity then implies that the derivatives must blow up in finite
time~\cite{lax2,lax3}.  For general systems, if the data has small
total variation, the nonlinear waves separate in finite time, after
which they are essentially decoupled and genuine nonlinearity in each
field causes gradients to blow up by the same mechanism.

In~\cite{G3,G4}, we consider the compressible Euler equations, and
make a series of nonlinear changes of variables to get an uncoupled
pair of quadratic ODEs for the genuinely nonlinear fields, which
exhibit the same blowup mechanism.  Although there are no restrictions
on the size of the initial data, the results of~\cite{G3,G4} rely on
assumed global bounds for the state variables.  In this paper, we
first remove this restriction by obtaining an \emph{a priori} estimate
on the state variables for $C^1$ solutions of the Euler equations,
yielding a condition on only the initial data which guarantees blowup
of gradients.  See \cite{LinVong} for a description of attempts to
obtain a stronger $L^\infty$ bound and global existence which were
unsuccessful.

We then extend the method to larger systems of physical interest.  We
show similar gradient blowup results for one-dimensional
magnetohydrodynamics (MHD) with orthogonal magnetic field, and
compressible flow in a variable area duct.  As a special case, we
obtain a gradient blowup result for spherically symmetric
three-dimensional Euler flow on the exterior of a ball.  These blowup
results again rely on global bounds for the state variables, but in
view of our obtained bounds for the compressible Euler equations, we
expect that this is just a technical assumption.  Indeed, we note that
all known cases of blowup of state variables occur in non-physical
systems~\cite{Jens,JY,Yblowup1,Ynonuniq,szY}.

The mechanism for gradient blowup and shock formation is the
steepening of compressions due to genuine nonlinearity: if the
wavespeed is larger behind the wave, it will eventually break.  For
\x2 systems (or diagonal systems such as chromatography), the
different wave families weakly decouple and all compressions will
collapse into shocks.  However, in \x3 systems, and especially for
large data, the wave interactions between different families influence
this process and can delay the onset of
shocks~\cite{linliuyang,liufg,young blake 1}.  In the context of
Euler, the contacts act as partial reflectors of waves and with care
one can set up solutions in which compressions are cancelled before
they focus~\cite{young blake 1,G1}.

Our conditions which guarantee blowup are expressed in terms of global
bounds for the variables, so we must ensure that the state does not
leave some compact set.  In particular, for a $\gamma$-law gas, the
vacuum must be avoided.  Moreover, when $1<\gamma<3$, our estimates
depend on a lower bound for the density as well as on the upper
bound.  This is also true in the isentropic case treated by Lax.

The paper is organized as follows: in Section 2, we obtain \emph{a
  priori} $L^\infty$ bounds for the density and velocity in the
one-D compressible Euler equations.  We then use this to stregthen the
gradient blowup result of~\cite{G3}.  In Section~3, we consider one-D
MHD with orthogonal magnetic field, derive Riccati type equations for
gradients, and use these to obtain a stronger singularity
formation result than that of \cite{G4}.  In Section~4, we similarly
derive Riccati type equations and present a shock formation theorem
for compressible Euler flow in a variable area duct.  This includes
the special case of three-D spherically symmetric flow in the exterior
of a ball.

\section{Compressible Euler equations}

We begin by considering the initial value problem for the compressible
Euler equations in a Lagrangian frame in one space dimension,
\begin{align}
\tau_t-u_x&=0,\label{lagrangian1}\\
u_t+p_x&=0,\label{lagrangian2}\\
(\frac{1}{2}u^2+e)_t+(u\,p)_x&=0, \label{lagrangian3}
\end{align}
where $\rho$ is the density, $\tau=\rho^{-1}$ is the specific volume,
$p$ is the pressure, $u$ is the velocity, $e$ is the internal energy,
and $x$ is the material coordinate.  The system is closed by the
Second Law of Thermodynamics,
\beq
  T\,dS = de + p\,d\tau,
\label{2TD}
\eeq
where $S$ is the entropy and $T$ the temperature.  For $C^1$ solutions,
it follows that (\ref{lagrangian3}) is equivalent to the ``entropy
equation''
\beq
   S_t=0.
\label{s con}
\eeq
When the entropy is constant, the flow is isentropic, and
(\ref{lagrangian1}) and (\ref{lagrangian2}) become a closed system,
known as the $p$-system~\cite{smoller}.

We assume the gas is ideal polytropic, so that
\[
  p\,\tau=R\,T   \com{and}  e=c_\tau\,T
\]
with ideal gas constant $R$, and specific heat $c_\tau$; this implies 
\beq
   p=K\,e^{\frac{S}{c_\tau}}\,\tau^{-\gamma}
\label{introduction 3}
\eeq
with adiabatic gas constant $\gamma = \frac R{c_\tau} +1$,
see~\cite{courant}.  The nonlinear Lagrangian sound speed is
\beq
  c:=\sqrt{-p_\tau}=
  \sqrt{K\,\gamma}\,{\tau}^{-\frac{\gamma+1}{2}}\,e^{\frac{S}{2c_\tau}}.
\label{c def}
\eeq

\subsection{Coordinates}
We use the coordinates introduced by Temple and Young in
\cite{young blake 1}. Define new variables $m$ and $\eta$ for $S$ and
$\tau$, by
\beq
   m:=e^{\frac{S}{2c_\tau}}>0,\label{m def}
\eeq
and, referring to \eq{c def},
\beq
   \eta := \int^\infty_\tau{\frac{c}{m}\,d\tau}
         = \TS\frac{2\sqrt{K\gamma}}{\gamma-1}\,
\tau^{-\frac{\gamma-1}{2}}>0.\label{z def}
\eeq
It follows that
\begin{align}
  \tau&=K_{\tau}\,\eta^{-\frac{2}{\gamma-1}},\nn\\
  p&=K_p\, m^2\, \eta^{\frac{2\gamma}{\gamma-1}},\label{tau p c}\\
  c&=c(\eta,m)=K_c\, m\, \eta^{\frac{\gamma+1}{\gamma-1}}, \nn
\end{align}
where $K_\tau$, $K_p$ and $K_c$ are positive
constants given by
\beq
  K_\tau=\TS(\frac{2\sqrt{K\gamma}}{\gamma-1})^\frac{2}{\gamma-1},
\quad
  K_p=K\,K_\tau^{-\gamma},\com{and}
  K_c=\TS\sqrt{K\gamma}\,K_\tau^{-\frac{\gamma+1}{2}},
\label{Kdefs}
\eeq
so that also
\beq
   K_p=\TS\frac{\gamma-1}{2\gamma}K_c \com{and}
   K_\tau K_c=\frac{\gamma-1}{2}.
\label{KpKcRela}
\eeq
In these coordinates, for $C^1$ solutions, equations
\eq{lagrangian1}--\eq{lagrangian3} are equivalent to
\begin{align}
  \eta_t+\frac{c}{m}\,u_x&=0, \label{lagrangian1 zm}\\
  u_t+m\,c\,\eta_x+2\frac{p}{m}\,m_x&=0,\label{lagrangian2 zm}\\
  m_t&=0\label{lagrangian3 zm},
\end{align}
the last equation being (\ref{s con}), which is equivalent to
(\ref{lagrangian3}).  Note that, while the solution remains $C^1$,
$m=m(x)$ is given by the initial data and can be regarded as
stationary.

\subsection{$L^\infty$ bounds for $C^1$ solutions}

Our first goal is to provide a uniform upper bound for the density
$\rho$ and velocity $|u|$, depending only on the initial data, for
$C^1$ solutions of (\ref{lagrangian1})--(\ref{lagrangian3}).
Denote the Riemann invariants by
\beq
  r:=u-m\,\eta,\qquad s:=u+m\,\eta.
\label{r_s_def}
\eeq
For isentropic $C^1$ solutions ($m$ constant) the Riemann invariants
$s$ and $r$ are constants along forward and backward characteristics,
respectively.  However, for general non-isentropic flow, $s$ and $r$
vary along characteristics.  

The forward and backward characteristics are described by
\[
  \frac{dx}{dt}=c \com{and} \frac{dx}{dt}=-c,
\]
and we denote the corresponding directional derivatives along these by
\[
  \pp := \dbyd t+c\,\dbyd x \com{and}
  \pn := \dbyd t-c\,\dbyd x,
\]
respectively.  Using \eq{r_s_def}, (\ref{tau p c}) and
(\ref{KpKcRela}), equations (\ref{lagrangian1 zm}) and
(\ref{lagrangian2 zm}) become
\begin{align}
  \pp s&=\frac{1}{2\gamma}\,\frac{\pp m}{m}\,(s-r),\label{s_eqn}\\
  \pn r&=\frac{1}{2\gamma}\,\frac{\pn m}{m}\,(r-s),\label{r_eqn}
\end{align}
while by (\ref{lagrangian3}), we have
\beq
  \pp m=c\, m'(x),\com{and}
  \pn m=-c\, m'(x).
\label{m_2eqn}
\eeq

Applying an integrating factor and setting
\[
  \tilde{s}=m^{-\frac{1}{2\gamma}}\,s \com{and}
  \tilde{r}=m^{-\frac{1}{2\gamma}}\,r,
\]
we obtain the system
\begin{align}
\pp\tilde{s}&=-\frac{1}{2\gamma}\,\frac{\pp m}{m}\,\tilde{r}\label{ti_s},\\
\pn\tilde{r}&=-\frac{1}{2\gamma}\,\frac{\pn m}{m}\,\tilde{s}\label{ti_r}.
\end{align}

We obtain upper bounds for $C^1$ solutions of
\eq{lagrangian1}--\eq{lagrangian3} by bounding $|\tilde{s}|$ and
$|\tilde{r}|$.  We assume that the initial entropy $S(x)$ is $C^1$ and
has finite total variation, so that
\beq
  V := \frac{1}{2c_\tau}\int_{-\infty}^{+\infty}|S'(x)|\;dx
     = \int_{-\infty}^{+\infty}\frac{|m'(x)|}{m(x)}\;dx<\infty,
\label{Vdef}
\eeq
while also, by \eq{m def},
\beq
  0 < M_L < m(\cdot) < M_U,
\label{m_bounds}
\eeq
for some constants $M_L$ and $M_U$.  Also, since $\rho$ and $|u|$ are
bounded initially, there exist positive constants $M_s$ and $M_r$,
such that, in the initial data,
\beq
  |\tilde{s}_0(\cdot)|<M_s \com{and}
  |\tilde{r}_0(\cdot)|<M_r.
\label{Mrs}
\eeq

We define two useful constants by 
\begin{align*}
  N_1 &:= M_s+\ol V\,M_r+\ol V\,(\ol V\,M_s+{\ol V}^2\,M_r)
	\,e^{{\ol V}^2},\\
  N_2 &:= M_r+\ol V\,M_s+\ol V\,(\ol V\,M_r+{\ol V}^2\,M_s)
	\,e^{{\ol V}^2},
\end{align*}
where $\ol V := \frac{V}{2\gamma}$, which clearly depend only on the
initial data.

\begin{theorem}
\label{Thm_upper}
For $C^1$ solutions of (\ref{lagrangian1})--(\ref{lagrangian3}), we
have the \emph{a priori} bounds
\[
   |u|\leq\frac{N_1+N_2}{2}{M_U}^{\frac{1}{2\gamma}}
\com{and}
   \rho\leq\frac{N_1+N_2}{2}{M_L}^{\frac{1}{2\gamma}-1},
\]
\end{theorem}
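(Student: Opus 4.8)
The plan is to control $|\tilde s|$ and $|\tilde r|$ along characteristics using the transport equations \eq{ti_s}--\eq{ti_r}, and then unwind the changes of variables to recover bounds on $u$ and $\rho$. First I would integrate \eq{ti_s} along a forward characteristic and \eq{ti_r} along a backward characteristic through a given point $(x,t)$, writing
\[
  \tilde s(x,t) = \tilde s_0 - \frac{1}{2\gamma}\int \frac{\pp m}{m}\,\tilde r\,d\lambda,
  \qquad
  \tilde r(x,t) = \tilde r_0 - \frac{1}{2\gamma}\int \frac{\pn m}{m}\,\tilde s\,d\lambda,
\]
and then use \eq{m_2eqn}, namely $\pp m = c\,m'(x)$ and $\pn m = -c\,m'(x)$, to convert these line integrals (which have $d\lambda = dt$ along the curve) into integrals in the $x$ variable. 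Since $dx/dt = \pm c$ along the respective characteristics, the factor $c$ from \eq{m_2eqn} exactly cancels, so each bound becomes an integral of $|m'(x)|/m(x)$ against the other variable over a subinterval of $\mathbb{R}$, controlled by the total variation quantity $V$ in \eq{Vdef}, equivalently by $\ol V = V/2\gamma$.

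Next I would set up a Gronwall/Picard-iteration estimate. Letting $P(t) := \sup |\tilde s|$ and $Q(t) := \sup |\tilde r|$ over the relevant backward/forward characteristic triangle up to time $t$, the integrated equations give something like $P \le M_s + \ol V\,Q$ and $Q \le M_r + \ol V\,P$ pointwise in a crude form, but to get the specific constants $N_1, N_2$ I expect one must iterate this coupling once more and then close it with a Gronwall argument in which the ``leftover'' terms accumulate an $e^{\ol V^2}$ factor. Concretely: substitute the bound for one variable into the integral for the other, producing a term linear in $\ol V^2$ times a running sup, plus the seed terms $M_s + \ol V M_r$ (resp. $M_r + \ol V M_s$); applying Gronwall to the resulting integral inequality in $t$ yields exactly the $\ol V(\ol V M_s + \ol V^2 M_r)e^{\ol V^2}$-type correction, giving $|\tilde s| \le N_1$ and $|\tilde r| \le N_2$ globally. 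The key point making this work is that the kernel $|m'|/m$ is integrable with total mass $V$ independent of $t$, so the exponential factor is a fixed constant depending only on the data.

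Finally I would translate back. From $|\tilde s|, |\tilde r| \le N_1, N_2$ and $\tilde s = m^{-1/2\gamma}s$, $\tilde r = m^{-1/2\gamma}r$ with $m < M_U$ from \eq{m_bounds}, we get $|s|, |r| \le N_1 M_U^{1/2\gamma}, N_2 M_U^{1/2\gamma}$; since $u = (r+s)/2$ by \eq{r_s_def}, the velocity bound $|u| \le \frac{N_1+N_2}{2}M_U^{1/2\gamma}$ follows. For the density, $m\eta = (s-r)/2$, so $\eta \le \frac{N_1+N_2}{2}M_U^{1/2\gamma}/m$; combining with \eq{z def}/\eq{tau p c}, $\tau = K_\tau\eta^{-2/(\gamma-1)}$ and $\rho = \tau^{-1}$ is increasing in $\eta$, so an \emph{upper} bound on $\eta$ — hence a lower bound on $\eta$ is what one needs for $\rho$... here I would use instead that $\rho = \tau^{-1}$ grows with $\eta$, bound $\eta$ above using $m > M_L$ in the denominator, and chase the constants $K_\tau K_c = (\gamma-1)/2$ from \eq{KpKcRela} to land on $\rho \le \frac{N_1+N_2}{2}M_L^{1/2\gamma - 1}$. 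I expect the main obstacle to be bookkeeping the coupled Gronwall estimate so that the constants come out as the stated $N_1, N_2$ rather than something merely comparable; the geometry (characteristics may exit any finite region, but the $x$-integral of $|m'|/m$ is always bounded by $V$) is what prevents the estimate from degrading in time.
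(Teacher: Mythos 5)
Your overall route is the paper's: integrate \eq{ti_s} and \eq{ti_r} along characteristics, use \eq{m_2eqn} to cancel the factor $c$ and convert to integrals of $|m'|/m$ in $x$ controlled by $V$, close with a Gronwall argument to get $|\tilde s|\le N_1$, $|\tilde r|\le N_2$ (this is exactly Lemma~\ref{lemma1}), and then unwind $\tilde s=m^{-\frac1{2\gamma}}s$, $u=(s+r)/2$, $m\eta=(s-r)/2$ with \eq{m_bounds}. Your final translation step matches the paper's, including the roles of $M_U$ and $M_L$ (note the displayed ``$\rho$'' bound is really the resulting upper bound on $\eta$, which is what the paper actually uses later via $\rho=K_\tau^{-1}\eta^{2/(\gamma-1)}$).

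The gap is in the middle step, and it sits precisely at the nontrivial point of the proof. Your ``crude form'' $P\le M_s+\ol V\,Q$, $Q\le M_r+\ol V\,P$ with $P,Q$ sup-norms only closes when $\ol V<1$; substituting once more gives $P\le M_s+\ol V M_r+\ol V^2 P$, which is again vacuous for $\ol V\ge1$, and there is no evident ``integral inequality in $t$'' to which Gronwall applies: the weight $\frac{|m'|}{m}\,c$ along a characteristic, viewed as a function of $t$, depends on which characteristic one is on, so no single integrable-in-$t$ kernel dominates all of them, and the alternating forward/backward structure of the iterated integrals does not automatically produce the ordered-simplex ($1/n!$) gain that the exponential $e^{\ol V^2}$ in $N_1$ encodes. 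The paper's resolution is to avoid sup-norms: starting from the double-integral identity \eq{key_estimate0}, it multiplies by $|\pp m/m|$ and integrates along the backward characteristic $\Ll_{x_1}$, introducing the functional $F(x_\si)=\bint|\tilde s|\,d\mu$ taken along $\Ll_{x_\si}$ as in \eq{Fdef}. The key geometric observation is that each inner backward characteristic in the resulting triple integral extends up to a point $(x_\si,t_\si)$ on the fixed forward characteristic $\Lr_{x_1}$, so its contribution is dominated by $F(x_\si)$; exchanging the order of integration then yields the genuinely one-variable inequality $F(x_1)\le V M_s+\frac{1}{2\gamma}V^2M_r+\frac{V}{4\gamma^2}\fint F(x_\si)\,d\mu(x_\si)$ in the position $x_\si$ along $\Lr_{x_1}$, to which Gronwall applies and gives $F\le\hat N_1$; plugging back into \eq{key_estimate0} gives $N_1$. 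Without this device (or an equivalent reorganization) your argument does not close for large entropy variation, which is the large-data regime the theorem is aimed at.
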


Note that the density $\rho$ need not be bounded away from zero, as
vacuums can form in infinite time~\cite{YpRP2,G1}.  The theorem is an
immediate consequence of the following estimate.

\begin{lemma}
\label{lemma1}
For a given point $(x_1,t_1)$, suppose the solution is $C^1$ in the
characteristic triangle bounded by the forward and backward
characteristics through $(x_1,t_1)$ and the line $t=0$. Then
\[
   |\tilde{s}(x_1,t_1)|\leq N_1
\com{and}
   |\tilde{r}(x_1,t_1)|\leq N_2.
\]
\end{lemma}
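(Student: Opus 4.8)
The plan is to estimate $|\tilde s|$ and $|\tilde r|$ along characteristics using the integrating-factor system \eq{ti_s}--\eq{ti_r} together with the bound \eq{m_2eqn} on $|\partial_\pm m|$. The key observation is that $\frac{1}{2\gamma}\frac{|\partial_\pm m|}{m} = \frac{c}{2\gamma}\frac{|m'(x)|}{m}$, and when integrated along a forward or backward characteristic this produces exactly a factor controlled by $\ol V = \frac{V}{2\gamma}$, since $dx = \pm c\,dt$ converts the time-integral of $\frac{c|m'|}{m}$ into the space-integral $\int \frac{|m'|}{m}\,dx \le V$. Concretely, integrating \eq{ti_s} along the forward characteristic from $t=0$ to $(x_1,t_1)$ gives
\[
  |\tilde s(x_1,t_1)| \le M_s + \int_{\Gamma_+} \frac{1}{2\gamma}\frac{|\partial_+ m|}{m}\,|\tilde r|\,dt
  = M_s + \frac{1}{2\gamma}\int_{\Gamma_+}\frac{c\,|m'(x)|}{m}\,|\tilde r|\,dt,
\]
and similarly for $|\tilde r|$ along the backward characteristic via \eq{ti_r}.

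First I would set up a Picard/Gronwall-type iteration on the characteristic triangle. Let $P(t_1) := \sup |\tilde s|$ and $Q(t_1) := \sup |\tilde r|$ taken over the portion of the triangle at times $\le t_1$ (or more simply, bound the sup over the whole triangle by iterating). The core inequality, after the change of variables along characteristics, reads schematically $P \le M_s + \ol V\, Q$ and $Q \le M_r + \ol V\, P$ — but this naive version ignores that the $\tilde r$ appearing in the bound for $\tilde s$ is itself being transported and varies along a different characteristic, so one cannot immediately close the loop with a single algebraic substitution. The honest approach is to substitute the bound for $\tilde r$ (which involves an integral of $\tilde s$) back into the bound for $\tilde s$, producing a double-integral term. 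One then iterates once more: the double integral of $|\tilde s|$ over the nested characteristic region is controlled, using that each characteristic integral contributes at most $\ol V$, by $\ol V^2 \sup|\tilde s|$ on a slightly larger region, and a Gronwall argument in the ``generation number'' closes this as a geometric-type series whose sum is the $e^{\ol V^2}$ factor appearing in $N_1$, $N_2$. Tracking the constants: the first substitution contributes $M_s + \ol V M_r$, the second contributes $\ol V(\ol V M_s + \ol V^2 M_r)$, and the remaining infinite iteration multiplies this last group by $\sum_k \ol V^{2k}/k! = e^{\ol V^2}$, giving exactly $N_1$ (and symmetrically $N_2$).

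The main obstacle I anticipate is making the iteration-on-characteristics rigorous: unlike a standard Gronwall estimate where one integrates a single quantity against a single independent variable, here $\tilde s$ and $\tilde r$ are coupled and transported along two different, mutually intersecting characteristic families, and the region over which one integrates shrinks at each step in a way that depends on the geometry of the characteristic triangle. The clean way to handle this is to introduce the function $\Phi(t) := \sup\{|\tilde s(x,t')| + |\tilde r(x,t')| : (x,t') \text{ in the triangle},\ t' \le t\}$, show it satisfies a linear integral inequality of the form $\Phi(t) \le (M_s+M_r) + \tfrac{1}{\gamma}\int_0^t \psi(t')\Phi(t')\,dt'$ with $\int_0^{t_1}\psi \le V$ (after the $dx=\pm c\,dt$ substitution, using that entropy is transported so $m$ is constant along particle paths — but here we integrate along sound characteristics, so one must be careful that $\int_{\Gamma_\pm}\frac{c|m'|}{m}\,dt$ is still bounded by $V$; this holds because along a forward characteristic $x$ is monotone increasing in $t$ so the substitution is valid), and then apply the standard Gronwall lemma to get $\Phi(t_1) \le (M_s+M_r)e^{V/\gamma}$. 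This gives a clean but weaker bound; to recover the sharper constants $N_1, N_2$ stated in the lemma, one keeps $|\tilde s|$ and $|\tilde r|$ separate and does the two-step-then-iterate bookkeeping described above, which is where the $e^{\ol V^2}$ rather than $e^{2\ol V}$ comes from — the nonlinearity of the iteration in pairs of steps. I would present the sharp version, being careful at each substitution to verify that the characteristic through an interior point stays inside the triangle (guaranteed by the $C^1$ hypothesis and finite propagation speed), so that all the integral manipulations are justified.
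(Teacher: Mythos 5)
Your proposal follows essentially the same route as the paper: integrate \eq{ti_s} along the forward characteristic and \eq{ti_r} along backward characteristics, use $dx=\pm c\,dt$ to convert each characteristic integral into one against the measure $d\mu=|m'/m|\,dx$ of total mass at most $V$, substitute twice to produce a double integral, and close by iteration; your constant bookkeeping reproduces $N_1$ exactly. However, the step where you close the iteration is a genuine gap as written. Bounding the double integral by ``$\ol V^2\sup|\tilde s|$ on a slightly larger region'' and iterating in generation number gives only the geometric series $\sum_k \ol V^{2k}$, which converges only for $\ol V<1$; it does not produce the factor $e^{\ol V^2}=\sum_k \ol V^{2k}/k!$ that you (correctly) claim and that $N_1$ requires. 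The $1/k!$ comes from the nesting of the integration domains, and to extract it one must reduce the two-dimensional iteration to a one-dimensional Gronwall inequality. The paper does this by introducing $F(x_\si):=\bint^{\ola{x}_\si(0)}_{x_\si}|\tilde s|\,d\mu$, the integral of $|\tilde s|$ along the backward characteristic $\Ll_{x_\si}$ through a point $(x_\si,t_\si)\in\Lr_{x_1}$, and observing that the backward characteristic through any interior point $(x_{\ol\si},t_{\ol\si})\in\Ll_{x_\si}$ coincides with a sub-arc of $\Ll_{x_\si}$, so the innermost integral is bounded by $F(x_\si)$; after exchanging the order of the outer integrations this yields a linear Gronwall inequality for $F$ along $\Lr_{x_1}$ with respect to $d\mu$, hence $F\le\hat N_1$ and then $N_1$. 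Your remark about characteristics staying inside the triangle points in the right direction, but the operative fact is this coincidence of backward characteristics, not mere containment; without it the ``Gronwall in generation number'' does not close unconditionally.

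A second, smaller problem: your fallback argument via $\Phi(t)=\sup_x(|\tilde s|+|\tilde r|)$ does not work even as a weaker bound. To write the characteristic integral as $\int_0^t\psi(t')\,\Phi(t')\,dt'$ with $\int_0^{t_1}\psi\le V$ you would need a single time-integrable majorant $\psi(t')\ge\sup_x \frac{c\,|m'|}{m}(\cdot,t')$, and the substitution $dx=\pm c\,dt$ that yields the bound $V$ is valid only along one characteristic at a time; taking the supremum over $x$ first destroys it. Moreover $c$ depends on $\eta$, hence on $\tilde s-\tilde r$, so a uniform-in-$x$ bound on $c\,|m'|/m$ is precisely what is not available a priori. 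Since you ultimately present the sharp version, this passage should be removed rather than repaired.
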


\begin{proof}
We prove the bound on $\tilde{s}(x_1,t_1)$; the bound of
$\tilde{r}(x_1,t_1)$ is obtained similarly.  Referring to
Figure~\ref{cyz}, we denote the forward and backward characteristics
through a point $(x_*,t_*)$ by
\begin{align*}
  \Lr_{x_*} &= \{(x,\ora t(x))\ |\ x\le x_*\} 
            = \{(\ora x(t),t)\ |\ t\le t_*\}\com{and} \\
  \Ll_{x_*} &= \{(x,\ola t(x))\ |\ x\ge x_*\} 
            = \{(\ola x(t),t)\ |\ t\le t_*\},
\end{align*}
respectively, parameterized by $x$ or $t$, as convenient.

\begin{figure}[th] \centering
  \begin{tikzpicture}[scale=0.65]
    \draw[->] (-3.5,0) -- (-3.5,4);
    \node at (-3.7,3.8) {$t$};
    \draw[->] (-3.5,0) -- (9.5,0);
    \node at (9.3,-0.3) {$x$};
    \draw (8,0) .. controls (7,2) and (6.5,3) .. (4,6);
    \draw (-2,0) .. controls (0,1.3) and (1,2) .. (4,6);
    \draw[dashed] (1.5,0) .. controls (4,1.3) and (5,2) .. (6,3.55);
    \draw[dashed] (5.4,0) .. controls (4.3,1.3) and (3.8,2) .. (2.05,3.5);
    \draw[fill] (4,6) circle (0.1);
    \draw[fill] (2.05,3.5) circle (0.07);
    \draw[fill] (5.95,3.5) circle (0.07);
    \draw[fill] (4.1,1.54) circle (0.07);
    \node at (5.2,6) {$(x_1,t_1)$};
    \node at (1,3.7) {$(x_\si,t_\si)$};
    \node at (5.4,1.5) {$(x_{\ol\si},t_{\ol\si})$};
    \node at (7,3.8) {$(x_\xi,t_\xi)$};
    \node at (-2,-0.5) {$\ora x_1(0)$};
    \node at (1.5,-0.5) {$\ora x_\xi(0)$};
    \node at (8,-0.5) {$\ola x_1(0)$};
    \node at (5.4,-0.5) {$\ola x_\si(0)$};
    \node at (0,2.2) {$\Lr_{x_1}$};
    \node at (2.3,1.15) {$\Lr_{x_\xi}$};
    \node at (7.5,2.2) {$\Ll_{x_1}$};
    \node at (3.7,2.8) {$\Ll_{x_\si}$};
  \end{tikzpicture}
  \caption{Characteristic triangle}\label{cyz}
\end{figure}
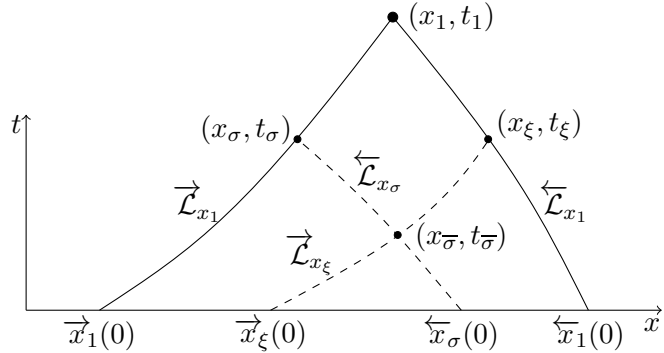

We integrate (\ref{ti_s}) along the forward characteristic $\Lr_{x_1}$
from $(\ora x_1(0),0)$ to $(x_1,t_1)$, with a change of integration
variable, and use $x_\si$ as the new parameter.  We then integrate
(\ref{ti_r}) along the backward characteristics $\Ll_{x_\si}$, from
$0$ to $t_\si$, for each $(x_\si,t_\si)\in\Lr_{x_1}$, and again change
variables.  We have
\[
  \tilde{s}(x_1,t_1) = \tilde{s}_0(\ora x_1(0)) - 
     \frac{1}{2\gamma}\,\fint_{\ora x_1(0)}^{x_1}
     \frac{m'}{m}(x_\si)\,\tilde{r}(x_\si,\ora{t}(x_\si))\;d x_\si,
\]
where we have used \eq{m_2eqn}, and similarly
\[
  \tilde{r}(x_\si,t_\si) = \tilde{r}_0(\ola{x}_\si(0)) -
     \frac{1}{2\gamma}\,\bint_{\ola{x}_\si(0)}^{x_\si}
     \frac{m'}{m}(x)\,\tilde{s}(x,\ola{t}_\si(x))\;dx.
\]
Here $\fint$ and $\bint$ indicate the direction of the characteristic
along which the integration is performed.  Combining these, we get
\begin{align}
  \tilde{s}(x_1,t_1) &= \tilde{s}_0(\ora x_1(0))
      -\frac{1}{2\gamma}\fint_{\ora x_1(0)}^{x_1}
      \frac{m'}{m}(x_\si)\,\tilde{r}_0(\ola{x}_\si(0))\;dx_\si
\label{key_estimate0}\\\nn
 &\qquad + \frac{1}{4\gamma^2}\fint^{x_1}_{\ora x_1(0)}
   \frac{m'}{m}(x_\si)\,\bigg(\bint_{\ola{x}_\si(0)}^{x_\si}
   \frac{m'}{m}(x)\,\tilde{s}(x, \ola{t}_\si(x))\,dx\bigg)\;dx_\si.
\end{align}
The first two terms can be estimated by our initial bounds, and we
will apply a Gronwall inequality to estimate the third term.

Equation \eq{key_estimate0} continues to hold for any point
$(x_\xi,t_\xi)$ on the backward characteristic $\Ll_{x_1}$, with $x_1$
replaced by $x_\xi$.  We multiply \eq{key_estimate0} by $\frac{\pp
  m}{m}(x_\xi)$, take absolute values, integrate in time along the
backward characteristic $\Ll_{x_1}$, and change to spatial variables.
Doing this, we get
\begin{align}
  \bint^{\ola{x}_1(0)}_{x_1}\big|&
     \tilde{s}\bigl(x_\xi,t_\xi)\bigr)\big|\;d\mu(x_\xi)
  \le{} \bint^{\ola{x}_1(0)}_{x_1} \big|
     \tilde{s}_0({\ora{x}_\xi(0)})\big|\;d\mu(x_\xi)  \nn\\
  &\quad{}+\frac{1}{2\gamma}\bint_{x_1}^{{\ola{x}_1(0)}}\!\!
     \fint_{{\ora{x}_\xi(0)}}^{x_\xi}
        \big|\tilde{r}_0(\ola{x}_{\ol\si}(0))\big|
        \;d\mu(x_{\ol\si})\;d\mu(x_\xi)  \label{bigint}\\\nn
  & \quad{}+\frac{1}{4\gamma^2}\bint^{{\ola{x}_1(0)}}_{x_1}\!\!
     \fint_{{\ora{x}_\xi(0)}}^{x_\xi}\!\!
     \bint^{{\ola{x}_{\ol\si}(0)}}_{x_{\ol\si}}\!\!
     \big|\tilde{s}(x, \ola{t}_{\ol\si}(x))
     \big|\;d\mu(x)\;d\mu(x_{\ol\si})\;d\mu(x_\xi),
\end{align}
where $d\mu$ is the Stieltjes measure
\[
   d\mu(x) := \bigg|\frac{m'}m(x)\bigg|\;dx,
\]
which has total mass at most $V$ along any characteristic.  Again the
first two terms of \eq{bigint} can be estimated by the initial data,
so we focus on the last term.

For $(x_\si,t_\si)$ on the forward characteristic $\Lr_{x_1}$, set
\beq
  F(x_\si) := \bint^{\ola{x}_\si(0)}_{x_\si}
     \!\!|\tilde{s}\bigl(x,\ola{t}_\si(x)\bigr)|\;d\mu(x),
\label{Fdef}
\eeq
where the integral is along the backward characteristic
$\Ll_{x_\si}$.  If the backward characteristic through $(x,
\ola{t}_{\ol\si}(x))$ is extended up to $\Lr_{x_1}$, the innermost
integral in \eq{bigint}  is estimated by
\[
  \bint^{\ola{x}_{\ol{\si}}(0)}_{x_{\ol{\si}}}
     \!\!|\tilde{s}\bigl(x,\ola{t}_{\ol{\si}}(x)\bigr)|\;d\mu(x)
 \le F(x_\si),
\]
provided $(x_{\ol{\si}},t_{\ol{\si}})\in\Ll_{x_\si}$, so that also
$\ola{x}_{\ol\si}(0) = \ola{x}_{\si}(0)$, and we change the order of
the other two integrations.  Using \eq{Fdef} with
\eq{Vdef} and \eq{Mrs}, it follows that \eq{bigint} yields
\begin{align*}
  F(x_1) &\leq V\,M_s+\frac{1}{2\gamma}V^2\,M_r+\frac{1}{4\gamma^2}
    \fint_{\ora x_1(0)}^{x_1}\!\!
    \bint^{\ola{x}_\si(0)}_{x_\si}\!\! 
      F(x_\sigma)\;d\mu(\ol x)\;d\mu(x_\si) \\
   &\leq V\,M_s+\frac{1}{2\gamma}V^2\,M_r+\frac{1}{4\gamma^2}\,V
    \fint_{\ora x_1(0)}^{x_1}\!\!F(x_\sigma)\;d\mu(x_\si).
\label{Fest}
\end{align*}

We now use Gronwall's inequality to get
\[
  F(x_1) \leq (V\,M_s+{\TS\frac{1}{2\gamma}}\,V^2\,M_r )\,
	e^{\frac{V^2}{4\gamma^2}}=:\hat{N}_1.
\]
It follows similarly that for $(x_\si,t_\si)\in\Lr_{x_1}$, we have
$F(x_\si)\le\hat N_1$.  Finally, using this estimate in
(\ref{key_estimate0}), we get
\[
  |\tilde{s}(x_1,t_1)|\leq \TS M_s+\frac{1}{2\gamma}
     V\,M_r+\frac{1}{4\gamma^2}V\,\hat{N}_1=N_1,
\]
and the proof is complete.
\end{proof}

\subsection{Singularity formation}

In \cite{G3}, we introduce gradient variables
\begin{align}
  y &= m^{-\frac{3(3-\gamma)}{2(3\gamma-1)}}\,
       \eta^{\frac{\gamma+1}{2(\gamma-1)}}\,
       ((u+m\,\eta)_x - {\TS\frac{2}{3\gamma-1}}\,m_x\,\eta)
\com{and}\nn\\
  q &= m^{-\frac{3(3-\gamma)}{2(3\gamma-1)}}\,
       \eta^{\frac{\gamma+1}{2(\gamma-1)}}\,
       ((u-m\,\eta)_x + {\TS\frac{2}{3\gamma-1}}\,m_x\,\eta),
\label{intr main}
\end{align}
and derive Riccati type equations for their evolution.

\begin{lemma}\label{them new ODEs}\cite{G3}
For $C^2$ solutions of (\ref{lagrangian1})--(\ref{lagrangian3}), we have
\begin{align} 
  \partial_+ y &= a_0+ a_2 \, y^2, \nn\\
  \partial_- q &= a_0+ a_2 \, q^2,
\label{yq odes}
\end{align}
where
\begin{align}
  {a}_0 &= {\TS\frac{K_c}{\gamma}}\,
        \big[{\TS\frac{\gamma-1}{3\gamma-1}}\,m\,m_{xx}
         - {\TS\frac{(3\gamma+1) (\gamma-1)}{(3\gamma-1)^2}}\,m_x^2\big]\,
	m^{-\frac{3(3-\gamma)}{2(3\gamma-1)}}\,
         \eta^{\frac{3(\gamma+1)}{2(\gamma-1)}+1},\nn\\
  {a}_2 &= -K_c\,{\TS\frac{\gamma+1}{2(\gamma-1)}}\,
	m^{\frac{3(3-\gamma)}{2(3\gamma-1)}}\,
        \eta^{\frac{\gamma+1}{2(\gamma-1)}-1}<0.
\label{adefs}
\end{align}
Furthermore,
\beq
  |y|\ \it{or}\ |q| \to \infty \
  \com{iff}\ 
  |u_x|\ \it{or}\ |\tau_x| \to \infty.
\label{y q blowup}
\eeq
\end{lemma}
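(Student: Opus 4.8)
The plan is to differentiate the first-order system in the $(\eta,u,m)$ coordinates of \eq{lagrangian1 zm}--\eq{lagrangian3 zm} and then, by matching coefficients, read off the nonlinear change of unknowns that turns the evolution of the gradients along characteristics into two decoupled scalar Riccati equations. First I would record the identities the computation rests on: the non-commutation rule $\pp(f_x)=(\pp f)_x-c_x\,f_x$ and its backward analogue, the relations $\pp m=c\,m'$, $\pn m=-c\,m'$ of \eq{m_2eqn}, the formula $c_x=\frac{\gamma+1}{\gamma-1}\,\frac c\eta\,\eta_x+\frac cm\,m'$ obtained by differentiating \eq{tau p c}, and the substitutions $2m\,\eta=s-r$, $2u=s+r$ of \eq{r_s_def}, which let one trade $\eta_x,u_x$ for $s_x,r_x$ and $m'$; these also give $\pp\eta=-\frac cm\,r_x-\frac{c\eta}m\,m'$ (using \eq{lagrangian1 zm} and $\pp s-\pp r=-2c\,r_x$). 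Differentiating \eq{s_eqn} in $x$ and substituting all of this produces an equation of the schematic form
\[
  \pp s_x=-\frac{\gamma+1}{2(\gamma-1)}\,\frac c{m\eta}\bigl(s_x^{\,2}-r_x\,s_x\bigr)
    +m'\,(b_1\,s_x+b_r\,r_x)+b_0 ,
\]
where $b_1,b_r$ are functions of $\eta,m$ and $b_0$ is a function of $\eta,m,m',m''$ only.

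Next I would make the ansatz $y=\phi(\eta,m)\,\bigl(s_x-\lambda(\eta,m)\,m'\bigr)$ and expand
\[
  \pp y=\phi\,\pp s_x+(\pp\phi)\,s_x-\pp(\phi\lambda\,m'),\qquad
  \pp\phi=\phi_\eta\,\pp\eta+\phi_m\,\pp m ,
\]
using the identities above, so that $\pp y$ becomes an expression quadratic in $(s_x,r_x)$ with coefficients depending on $\eta,m,m',m''$. Since $y^2=\phi^2 s_x^2-2\phi^2\lambda\,m'\,s_x+\phi^2\lambda^2(m')^2$, requiring $\pp y=a_0+a_2\,y^2$ with $a_0,a_2$ free of $s_x,r_x$ becomes five matching conditions: (i) the $s_x^2$ coefficient gives $a_2=-\frac{\gamma+1}{2(\gamma-1)}\,\frac c{m\eta\,\phi}$; (ii) the $r_x s_x$ coefficient must vanish, forcing $\phi_\eta/\phi=\frac{\gamma+1}{2(\gamma-1)\eta}$, i.e.\ the factor $\eta^{\frac{\gamma+1}{2(\gamma-1)}}$ of \eq{intr main}; (iii) the $m'r_x$ coefficient must vanish, forcing $(\phi\lambda)_\eta=\phi/(\gamma-1)$ and hence $\lambda=\frac2{3\gamma-1}\,\eta$, the correction in \eq{intr main}; (iv) the $m's_x$ coefficient must equal $-2a_2\phi^2\lambda$, the cross term already carried by $y^2$, and this single scalar equation together with (ii)--(iii) pins down $\phi=m^{-\frac{3(3-\gamma)}{2(3\gamma-1)}}\eta^{\frac{\gamma+1}{2(\gamma-1)}}$, exactly as in \eq{intr main}; (v) what remains is the pure source, which is $a_0$. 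Substituting the explicit $b_1,b_r,b_0$ and simplifying with \eq{tau p c} gives the formulas \eq{adefs}, with $a_2<0$ because $\gamma>1$. The equation $\pn q=a_0+a_2\,q^2$ follows from the $s\leftrightarrow r$, $\pp\leftrightarrow\pn$, $m'\leftrightarrow-m'$ symmetry of the system, running the same computation along backward characteristics.

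For the equivalence \eq{y q blowup}: on any compact subset of the region where the solution is $C^2$, the functions $m,\eta,m'$ are continuous and $m,\eta$ are strictly positive, hence bounded above and below there, so $\phi$ is bounded above and below and the correction $\frac2{3\gamma-1}m_x\eta$ is bounded; therefore $|y|$ or $|q|\to\infty$ if and only if $|s_x|$ or $|r_x|\to\infty$. Since $s_x+r_x=2u_x$ and $s_x-r_x=2(m'\eta+m\,\eta_x)$, and $\eta_x$ is a bounded, bounded-away-from-zero multiple of $\tau_x$ (because $\eta$ is a smooth strictly decreasing function of $\tau$ by \eq{z def}), this is equivalent to $|u_x|$ or $|\tau_x|\to\infty$.

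The only real difficulty is the bookkeeping in the coefficient-matching, and within it the one genuinely non-obvious point is (iv): the $m_x\,s_x$ term left over in $\pp y$ is \emph{not} annihilated but is absorbed by the cross term $-2\phi^2\lambda\,m_x s_x$ hidden inside $y^2$, and it is this matching — not a vanishing condition — that forces the exponent $-\frac{3(3-\gamma)}{2(3\gamma-1)}$ on $m$. A safe way to organise the algebra is to treat the isentropic case ($m$ constant) first, which isolates the classical Lax integrating factor and confirms the $\eta$-power, and then add the $m$-dependent terms and fix the remaining constants from (iii)--(iv).
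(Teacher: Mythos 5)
The paper does not reprove this lemma --- it is quoted from \cite{G3} --- so the only in-paper comparison is with the analogous derivation carried out for the duct system in Section~4, which proceeds by the same mechanism: differentiate the $(\eta,u,m)$ (resp.\ $(z,u,m)$) system, use the commutator $\pp(f_x)=(\pp f)_x-c_x f_x$ with $c_x$ expanded via \eq{tau p c}, and fix the prefactor and the $m_x\eta$ correction so that the cross and linear terms are absorbed. Your single-step ansatz $y=\phi(\eta,m)(s_x-\lambda m')$ with coefficient matching is the same computation organized slightly differently (the paper first derives coupled Riccati equations for $\alpha,\beta$ and then decouples), and the checkable outputs agree: condition (ii) gives Lax's factor $\eta^{\frac{\gamma+1}{2(\gamma-1)}}$, condition (iii) gives $\lambda=\frac{2}{3\gamma-1}\eta$, and your formula $a_2=-\frac{\gamma+1}{2(\gamma-1)}\frac{c}{m\eta\phi}$ reproduces \eq{adefs} exactly. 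This is essentially the paper's approach, correctly executed at the level of a proof sketch.
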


Similar equations are derived independently in~\cite{linliuyang}.
Using these equations, a singularity formation result was proved in
\cite{G3}.  By applying Lemma \ref{lemma1}, we can improve that result
to obtain our main theorem on the Euler equations.

\begin{theorem}
\label{Thm singularity2}
Assume the initial data are $C^2$ with entropy having bounded
variation, and suppose there is a positive constant $M_*$ such that
the initial entropy satisfies $|m''(x)|<M_*$.  If $1<\gamma<3$, we
also assume that the density has a positive global lower bound.  There
exists positive constant $N$ depending only on the initial data, such
that, if the initial data satisfies
\beq
  \inf\;\{\  y(\cdot,0),\ q(\cdot,0)\ \} < -N,
\label{yq-N}
\eeq
then  $|u_x|$ and/or $|\tau_x|$ blow up in finite time.
\end{theorem}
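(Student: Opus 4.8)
The plan is to reduce the statement to the well-known Riccati blowup mechanism for the decoupled ODEs in Lemma~\ref{them new ODEs}, using Lemma~\ref{lemma1} to control the variable coefficients $a_0$ and $a_2$ along characteristics. Along a forward characteristic one has $\partial_+ y = a_0 + a_2\,y^2$ with $a_2<0$; the first task is to show that, under the hypotheses, $a_0$ is bounded above and $-a_2$ is bounded below by positive constants, uniformly in space and time, as long as the solution stays $C^1$. This is exactly where Theorem~\ref{Thm_upper} (equivalently Lemma~\ref{lemma1}) enters: it gives an a priori upper bound on $\rho$, hence a lower bound on $\tau$, hence upper and lower bounds on $\eta$ from \eq{z def} (when $1<\gamma<3$ the extra hypothesized positive lower bound on $\rho$ supplies the needed upper bound on $\eta$; when $\gamma\ge 3$ one must check that the exponents work out without it). Combined with \eq{m_bounds}, the assumed bound $|m''|<M_*$, and the bound on $\int |m'/m|\,dx$, the explicit formulas \eq{adefs} then yield constants $\alpha_0$ and $\alpha_2>0$ with $a_0\le \alpha_0$ and $a_2\le -\alpha_2<0$ pointwise along characteristics.

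Given those bounds, the ODE comparison is standard: if at some point on a forward characteristic $y<-N$ with $N:=\sqrt{\alpha_0/\alpha_2}+\delta$ for suitable $\delta$ (or more simply $N$ chosen large enough that $\alpha_0 + a_2 N^2 < -\beta < 0$ for some fixed $\beta$), then $\partial_+ y \le \alpha_0 - \alpha_2 y^2 \le \alpha_0 - \alpha_2 N^2 < 0$ as long as $y\le -N$, so $y$ stays below $-N$ and in fact $\partial_+ y \le -\alpha_2 (y^2 - N^2)$; integrating this Riccati inequality shows $y\to -\infty$ in finite time $T$. The same argument applies verbatim to $q$ along backward characteristics. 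Thus if \eq{yq-N} holds, then along the characteristic through the point realizing the infimum, either $y$ or $q$ reaches $-\infty$ within a finite time; by \eq{y q blowup}, $|u_x|$ or $|\tau_x|$ blows up, which contradicts the solution remaining $C^1$ (equivalently $C^2$) up to that time. One should note the needed global bounds on $\eta$, $m$, etc.\ are precisely the ones guaranteed \emph{a priori} (not merely assumed) by Lemma~\ref{lemma1}, which is the improvement over \cite{G3}.

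The main obstacle, and the only genuinely delicate point, is making the coefficient bounds \emph{uniform up to the blowup time} rather than merely on each fixed characteristic triangle: one must argue that the constants $\alpha_0,\alpha_2$ do not degenerate as $t$ increases toward the putative blowup time. Since Lemma~\ref{lemma1} applies to any characteristic triangle in which the solution is $C^1$, and the bounds $N_1,N_2$ there depend only on the initial data (through $M_s,M_r,M_U,M_L,V,\gamma$) and not on $t_1$, the state variable bounds are genuinely global in time; the same is true of the $m$-bounds since $m$ is stationary. Hence $\alpha_0$ and $\alpha_2$ are honestly uniform, and the Riccati argument closes. A secondary technical check is the $\gamma$-dichotomy in the hypotheses: one should verify explicitly which exponents of $\eta$ appear in $a_0$ and $-a_2$ and confirm that for $1<\gamma<3$ an upper bound on $\eta$ (i.e.\ a lower bound on $\rho$) is indeed required to bound $a_0$ from above and keep $-a_2$ bounded below, while for $\gamma\ge 3$ it is not — this is the same phenomenon already flagged in the isentropic case treated by Lax.
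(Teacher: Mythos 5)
Your proposal is correct and follows essentially the same route as the paper's proof: the \emph{a priori} bound of Theorem~\ref{Thm_upper} gives a global upper bound on $\eta$, the explicit formulas \eq{adefs} then bound the root $\sqrt{-a_0/a_2}$ by a constant $N$ depending only on the data, and the Riccati comparison along a forward (resp.\ backward) characteristic drives $y$ (resp.\ $q$) to $-\infty$ in finite time, with \eq{y q blowup} converting this into gradient blowup. One small correction of direction: for $1<\gamma<3$ the hypothesized positive lower bound on $\rho$ supplies a \emph{lower} bound on $\eta$ (the upper bound on $\eta$ already comes from the a priori upper bound on $\rho$), and it is needed precisely to keep $-a_2\propto\eta^{\frac{3-\gamma}{2(\gamma-1)}}$ bounded away from zero near vacuum.
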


This result is an extension of Lax's singularity formation result in
\cite{lax2} for $2\times2$ strictly hyperbolic systems.  We recover
the isentropic case by taking $m$ constant, so $a_0=0$.  The
conditions which guarantee blowup are expressed in terms of global
bounds for the variables, so we must ensure that the state does not
leave some compact set, and, in particular, the vacuum must be
avoided.  When $1<\gamma<3$, our estimates depend on a lower bound for
the density as well as on the upper bound.  This is also true in the
isentropic case treated by Lax.  However, when $1<\gamma<3$, we expect
a more refined analysis to show that even if the solution approaches
vacuum, the gradient still blows up in finite time~\cite{Ygdbu}.

\begin{proof}
By Theorem \ref{Thm_upper} and (\ref{z def}), we know $\eta$ has a
global upper bound depending only on the initial data, denoted $E_U$.

By (\ref{adefs}), it is easy to calculate that, if $a_0\geq0$,
\[
  \sqrt{-\frac{a_0}{a_2}} =
  \sqrt{{\TS\frac{2(\gamma-1)^2}{\gamma(\gamma+1)(3\gamma-1)}}\,
        \big(m\,m_{xx}-{\TS\frac{3\gamma+1}{3\gamma-1}}\,m_x^2\big)}\,
    \eta^{\frac{\gamma+1}{2(\gamma-1)}+1}\,m^{-\frac{3(3-\gamma)}{2(3\gamma-1)}},
\]
which implies the uniform bound $\sqrt{-{a_0}/{a_2}} \le N$, where
\beq
  N :=
  \begin{cases}
  \sqrt{\frac{2(\gamma-1)^2}{\gamma(\gamma+1)(3\gamma-1)}\,M_*}
        \  E_U^{\frac{3\gamma-1}{2(\gamma-1)}}\  
        M_L^{\frac{3\gamma-5}{3\gamma-1}}, & 1<\gamma\le 5/3,\\
  \sqrt{\frac{2(\gamma-1)^2}{\gamma(\gamma+1)(3\gamma-1)}\,M_*}
        \,E_U^{\frac{3\gamma-1}{2(\gamma-1)}}\,
        M_U^{\frac{3\gamma-5}{3\gamma-1}}, & \gamma\ge 5/3.
  \end{cases}
\label{Ndef}
\eeq     
It follows that if $y<-N$, then $a_0+a_2\,y^2<0$.

Now suppose that \eq{yq-N} holds.  Then there exist $\eps>0$ and $x_0$
such that
\[
  y(x_0,0) < -(1+\eps)\,N,
\]
say.  Now consider the forward characteristic starting at $(x_0,0)$.
By \eq{yq odes}, along this characteristic (parametrized by $t$) we
have $\pp y < 0$, so also
\[
   y(t) < -(1+\eps)\,N \com{for} t\ge 0,
\]
which in turn implies
\[
   a_0 + a_2\,\frac{y(t)^2}{(1+\eps)^2} < 0
\]
for all $t \ge 0$.  Now (\ref{yq odes}) implies
\[ 
  \pp y = a_0 + a_2 \,y^2
       < (1-{\TS\frac{1}{(1+\eps)^2}})\, a_2\,y^2<0,
\]
since $a_2<0$.  Integrating, we get 
\beq
  \frac{1}{y(t)} \ge {\frac{1}{y(0)} -
    \fint_0^t (1-{\TS\frac{1}{(1+\varepsilon)^2}})\, {a_2}\;dt},
\label{SS9 1}
\eeq 
where the integral is along the forward characteristic.  By
(\ref{adefs}), when $\gamma\geq 3$, $a_2$ is negative and bounded
above, so the right hand side of (\ref{SS9 1}) approaches zero in
finite time.  This implies that $y(t)$ approaches $-\infty$ in finite
time, so that $|\tau_x|$ and/or $|u_x|$ blow up.

When  $1<\gamma<3$, and assuming $\eta$ has a positive (global) lower
bound  along the characteristic, then $a_2$ is again negative and
bounded above, so the result follows similarly.  It is clear that the
same argument holds along a backward characteristic if $\inf q<-N$.
\end{proof}

\section{Magnetohydrodynamics}

\subsection{Prior results for generalized Euler equations}
\label{section_mhd}

We recall the results of \cite{G4} for the generalized $p$-system
(which includes the smooth Euler equations) in one space dimension,
\begin{align}
  \tau _t-u_x &= 0,\nn\\
  u_t+P(\tau ,x)_x &= 0,
\label{gpsys}
\end{align} 
where $P(\tau,x)$ is a $C^3$ function of $\tau>0$ and $x$, satisfying
\[
  P_{\tau }<0 \com{and} P_{\tau\tau}>0,
\]
so that the system is hyperbolic with wavespeed
\[
  c := c(\tau,x) = \sqrt{-P_\tau}.
\]
It is convenient to introduce new variables $(h,\mu)$ for $(\tau ,x)$,
by setting 
\[
  h(\tau ,x) :=\int_\tau^{\tau^*}c\; d\tau
     = \int_\tau ^{\tau ^{*}}\,\sqrt{-P_\tau}\;d\tau \com{and} \mu=x,
\]
where $\tau ^*$ is a constant or infinity.
It follows that
\beq
  h_\tau=-c, \quad
  \tau _h=-\frac{1}{c},  \com{and}
  P_h = c,
\label{mhd_htp}
\eeq
and furthermore, for any $C^1$ function $f(\tau,x)$,
\beq
  f_h = -\frac{f_\tau }{c} \com{and}
  f_\mu = \frac{h_x}{c}\,f_\tau + f_x.
\label{mhd_f}
\eeq

We define gradient variables by
\begin{align}
  y := \sqrt{c}\,(u+h)_x + \frac{P_\mu}{\sqrt{c}}-I, 
\nn\\\label{mhd_yq}
  q := \sqrt{c}\,(u-h)_x - \frac{P_\mu}{\sqrt{c}}+I,
\end{align}
where
\beq
  I = I(h,\mu) =
  \int_{h_0}^{h} {\frac{1}{2}\,\sqrt{c}
	\,\left(\frac{P_\mu}{c}\right)_h}\;dh,
\label{Idef}
\eeq
and $h_0$ is a constant.

\begin{lemma}\cite{G4} 
\label{mhd them new ODEs}
For $C^2$ solutions of (\ref{gpsys}), we have
\begin{align}
  \pp y &= a_0+a_1\, y + a_2\,y^2, \nn\\
  \pn q &= a_0-a_1\, q + a_2\, q^2,
\label{mhdyqde}
\end{align} 
where 
\begin{align}
  a_0 &= -c\,I_\mu + 
    \frac{1}{2}\,\sqrt{c}\,\left(\frac{P_\mu}{c}\right)_h\,P_\mu -
    c\,\left(\frac{P_\mu}{c}\right)_h\, I-\frac{c_h}{2\sqrt{c}}\,I^2,
\nn\\\label{mhd_a}
  a_1 &= -\big(2\,\sqrt{c}\,I\big)_h =
    -c\,\left(\frac{P_\mu}{c}\right)_h-\frac{c_h}{\sqrt{c}}\,I,\\\nn
  a_2 &= -\frac{c_h}{2\sqrt{c}} < 0.
\end{align}
\end{lemma}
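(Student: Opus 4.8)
The plan is to verify \eq{mhdyqde} by a direct computation: differentiate \eq{gpsys} in $x$ and, using the coordinate identities \eq{mhd_htp}--\eq{mhd_f}, reduce the evolution of $(u+h)_x$ and $(u-h)_x$ along characteristics to a \emph{closed} scalar equation for $y$. The equation for $q$ then follows symmetrically, with the roles of the forward and backward characteristics interchanged; this reverses the sign of $a_1$ but leaves $a_0$ and $a_2$ unchanged.

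First I would record the transport equations for the Riemann invariants. Substituting \eq{gpsys} into the $\pp$- and $\pn$-derivatives of $u\pm h$ and using \eq{mhd_htp} and \eq{mhd_f}, one finds $\pp(u+h)=\pn(u-h)=-P_\mu$ (so in the $x$-independent case $u\pm h$ are the usual constants along characteristics) together with the identities $\pp h=-c\,(u-h)_x$ and $\pn h=-c\,(u+h)_x$, which are responsible for all the quadratic and cross terms below. Next I would differentiate $\pp(u+h)=-P_\mu$ in $x$. Using the commutator relation $[\partial_x,\pp]=c_x\,\partial_x$, expressing the full spatial derivatives in the $(h,\mu)$ variables (so $h_x=\tfrac12((u+h)_x-(u-h)_x)$ and $c_x=c_h\,h_x+c_\mu$) and recalling $P_h=c$, this yields an equation for $\pp\big((u+h)_x\big)$ whose right-hand side is a quadratic polynomial in $\big((u+h)_x,(u-h)_x\big)$ with coefficients explicit in $(h,\mu)$.

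The decisive step is to pass to the weighted variable $y=\sqrt c\,(u+h)_x+P_\mu/\sqrt c-I$ from \eq{mhd_yq}. Writing $\pp y=\sqrt c\,\pp\big((u+h)_x\big)+(\pp\sqrt c)\,(u+h)_x+\pp\big(P_\mu/\sqrt c\big)-\pp I$ and using $\pp h=-c\,(u-h)_x$, one must check two cancellations: the cross terms $(u+h)_x(u-h)_x$ generated by $\sqrt c\,\pp\big((u+h)_x\big)$ and by $(\pp\sqrt c)(u+h)_x$ cancel identically (this is the same cancellation, via the integrating factor $\sqrt c$, that underlies Lax's argument for $2\times2$ systems), and the residual terms linear in $(u-h)_x$ cancel precisely because $I$ is defined by \eq{Idef}, i.e.\ because $I_h=\tfrac12\sqrt c\,(P_\mu/c)_h$. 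Once all $q$-dependence has dropped out, substituting $(u+h)_x=\tfrac{1}{\sqrt c}\big(y-P_\mu/\sqrt c+I\big)$ back in and simplifying with $P_h=c$ (hence $c_\mu=P_{\mu h}$) lets one read off $a_2=-c_h/(2\sqrt c)<0$, $a_1=-(2\sqrt c\,I)_h$, and $a_0$ in the four-term form displayed in \eq{mhd_a}.

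The main obstacle I anticipate is organisational rather than conceptual: the computation produces a large number of terms, and the argument succeeds only if the two structural cancellations --- of the $(u+h)_x(u-h)_x$ cross terms and of the linear-in-$(u-h)_x$ terms --- are kept transparent, since it is exactly these that force the gradient equations to decouple into scalar Riccati equations. Everything else is bookkeeping with the chain rule and \eq{mhd_htp}--\eq{mhd_f}, and the $q$ equation is obtained by the same computation along the backward characteristic.
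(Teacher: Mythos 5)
Your derivation is correct and takes essentially the same route as the cited proof in \cite{G4} (and as the analogous computation carried out in Section~4 for the duct system): differentiate the transport relations $\pp(u+h)=\pn(u-h)=-P_\mu$ in $x$, use the integrating factor $\sqrt c$ so that the $(u+h)_x(u-h)_x$ cross terms cancel, and use $I_h=\tfrac12\sqrt c\,(P_\mu/c)_h$ to cancel the residual linear coupling in $(u-h)_x$. The resulting coefficients agree with \eq{mhd_a} (for instance the linear terms give $a_1=-c_\mu+\tfrac{c_h P_\mu}{c}-\tfrac{c_h}{\sqrt c}\,I=-c\,(P_\mu/c)_h-\tfrac{c_h}{\sqrt c}\,I$ upon using $P_{\mu h}=c_\mu$), so there is no gap.
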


For arbitrarily given positive constants $A_i$ and $B_i$, we denote by
$\mathcal K$ the compact set whose interior $\mathcal{K}^o$ is given by
\begin{gather*}
  |h|<B_1, \quad A_2<c<B_2, \quad 
  A_3<c_h<B_3, \quad  |c_\mu|<B_4 \\
  |c_{\mu\mu}|<B_5, \quad |c_{h\mu}|<B_6, \quad 
  |p_\mu|<B_7, \quad |p_{\mu\mu}|<B_8.
\end{gather*}

\begin{lemma}\cite{G4}
\label{mhd Thm singularity2}
There exists a constant $\tilde{N}>0$ depending only on $\mathcal K$, such
that, if the $C^2$ initial data of (\ref{gpsys}) satisfy
\[
  (u(x,0),\;\tau(x,0)) \in \mathcal{K}^o,\com{for all} x,
\]
and if
\[
  \inf\;\{\ y(\cdot,0),\ q(\cdot,0)\ \} < -\tilde{N},
\]
then there exists $T_*=T_*(\mathcal K,\tilde{N})$ such that \emph{either}
\[
   \max\left\{|u_x|,|\tau_x|\right\} \to \infty \com{as} t\to T_*,
\]
\emph{or} there is some point $(x_b,t_b)$ with $t_b \le T_*$ such that
\[
  (u(x_b,t_b),\;\tau(x_b,t_b)) \in \partial\mathcal{K}.
\]
\end{lemma}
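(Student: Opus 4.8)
The plan is to combine the Riccati structure of the ODEs in Lemma~\ref{mhd them new ODEs} with the \emph{a priori} confinement of the state to $\mathcal K$ in order to force blowup before the state can reach $\partial\mathcal K$. First I would work on the characteristic triangle issuing backward-in-time from an arbitrary point, and assume throughout that the solution stays $C^2$ and that $(u,\tau)$ remains in $\mathcal K^o$; under that assumption every coefficient $a_0,a_1,a_2$ in \eq{mhd_a} is bounded, with uniform bounds depending only on the $A_i,B_i$ defining $\mathcal K$. In particular $a_2 = -c_h/(2\sqrt c)$ satisfies $-\beta_2 \le a_2 \le -\alpha_2 < 0$ for constants $\alpha_2,\beta_2>0$, and $|a_0|\le \alpha_0$, $|a_1|\le\alpha_1$ on $\mathcal K$. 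The key algebraic observation is the standard one for scalar Riccati equations: if $y$ is very negative, the $a_2 y^2$ term dominates and drives $\pp y$ strongly negative, so that $y$ decreases monotonically and, because $1/y$ then satisfies a differential inequality of the form $\pp(1/y) \le -\tfrac12\alpha_2 + (\text{lower order})$, the quantity $1/y$ crosses zero in finite time --- forcing $y\to-\infty$.

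Concretely, the second step is to choose $\tilde N$ so that the threshold $y(\cdot,0)<-\tilde N$ guarantees a quantitative dominance: writing $\pp y = a_0 + a_1 y + a_2 y^2$ and completing the square, one checks there is $\tilde N = \tilde N(\mathcal K)$ (e.g.\ any $\tilde N$ exceeding the larger root of $\alpha_0+\alpha_1 Y - \tfrac12\alpha_2 Y^2 = 0$) such that whenever $y \le -\tilde N$ one has $a_0 + a_1 y \le -\tfrac12 a_2 y^2$, hence $\pp y \le \tfrac12 a_2 y^2 \le -\tfrac12\alpha_2 y^2 < 0$. Thus along the forward characteristic through the point $(x_0,0)$ where $y(x_0,0)<-\tilde N$, the value $y$ stays below $-\tilde N$ for as long as the solution is $C^2$ and in $\mathcal K^o$, and we may integrate $\pp(1/y) = -\pp y / y^2 \ge \tfrac12\alpha_2$ to obtain $1/y(t) \ge 1/y(0) + \tfrac12\alpha_2 t$. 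Since $1/y(0) \in (-1/\tilde N, 0)$, the right side reaches $0$ by time $T_* := 2/(\alpha_2\tilde N)$, which depends only on $\mathcal K$ and $\tilde N$; hence on $[0,T_*)$ one cannot have a $C^2$ solution with state staying in $\mathcal K^o$ along the whole characteristic.

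The third step is to turn this contradiction into the stated dichotomy. Either the solution remains $C^2$ with $(u,\tau)\in\mathcal K^o$ everywhere on $[0,T_*]$ --- which the previous paragraph shows is impossible along the forward characteristic through $(x_0,0)$ --- or one of these two hypotheses must fail at some time $t_b\le T_*$. If the state exits $\mathcal K^o$, then by continuity it first meets $\partial\mathcal K$ at some $(x_b,t_b)$, giving the second alternative. If instead the state stays in $\mathcal K^o$ but the solution ceases to be $C^2$, then by \eq{y q blowup}-type reasoning (here the relation between $y,q$ and $u_x,\tau_x$ in \eq{mhd_yq}, together with the bounds on $c,P_\mu,I$ on $\mathcal K$, which make the prefactor $\sqrt c$ and the additive terms bounded and bounded away from zero), the loss of $C^2$ regularity forces $|y|$ or $|q|$ --- and hence $\max\{|u_x|,|\tau_x|\}$ --- to become unbounded as $t\to t_b\le T_*$, giving the first alternative. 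The symmetric argument with the backward characteristic handles the case $\inf q(\cdot,0)<-\tilde N$, using $\pn q = a_0 - a_1 q + a_2 q^2$.

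The main obstacle I expect is the bookkeeping in the third step: making precise the claim that, as long as the state is confined to $\mathcal K$, finiteness of $y$ and $q$ is equivalent to the solution remaining $C^2$ (i.e.\ that nothing else can go wrong first), and ensuring that the invertibility of the change of variables $(\tau,x)\mapsto(h,\mu)$ and the boundedness of $I$, $I_\mu$, $P_\mu$ and all the coefficients are genuinely uniform over $\mathcal K$ rather than merely locally bounded. The Riccati/Gronwall portion is routine once the coefficient bounds are in hand; the delicate point is organizing the alternatives so that ``blowup of gradients'' and ``state reaches $\partial\mathcal K$'' exhaust all ways the clean scenario can fail before time $T_*$.
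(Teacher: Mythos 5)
Your proposal is correct and follows essentially the same route the paper uses for this family of results: the lemma itself is quoted from \cite{G4} without proof here, but the paper's own proofs of Theorem~\ref{Thm singularity2} and Theorem~\ref{MHD_sing} proceed exactly as you do, by bounding the roots of the Riccati quadratic uniformly on $\mathcal K$, showing $y$ (or $q$) stays below the threshold so that $\pp y$ is dominated by a definite fraction of $a_2y^2$, and integrating $1/y$ to force blowup by a time $T_*$ determined by $\mathcal K$ and $\tilde N$ unless the state first reaches $\partial\mathcal K$. Your choice of $\tilde N$ via the root of $\alpha_0+\alpha_1Y-\tfrac12\alpha_2Y^2=0$ plays the same role as the paper's bound \eqref{hatN} on the quadratic's roots, and the concluding dichotomy is handled identically.
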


\subsection{Refinements for MHD}

In this section, we apply and upgrade Lemma~\ref{mhd Thm
  singularity2} for one-D Magnetohydrodynamics.  The system
models the motion of a smooth compressible fluid coupled to a magnetic
field $H=(H_1, H_2, H_3)$.  In a Lagrangian frame in one space
dimension, $C^1$ solutions satisfy the quasilinear system
\begin{align}
  \frac{\partial\tau}{\partial t} - 
    \frac{\partial u_1}{\partial x} &= 0, \nn\\
  \frac{\partial H_j}{\partial t} +
    \rho \,H_j \,\frac{\partial u_1}{\partial x} -
    \rho \,H_1 \,\frac{\partial u_j}{\partial x} &= 0,
\quad j = 2,3 \nn\\
  \frac{\partial u_1}{\partial t} +
    \frac{\partial }{\partial x}\,
    \big(p+{\TS\frac{1}{2}}\,\mu_0\,(H^2_2+H^2_3)\big) &= 0,
\label{fullmhd}\\
  \frac{\partial u_j}{\partial t} -
    \mu_0 \,H_1\,\frac{\partial H_j}{\partial x} &= 0,
\quad j = 2,3  \nn\\
  \frac{\partial S}{\partial t} &= 0,\nn
\end{align}
Here the fluid quantities are the density $\rho$, specific volume
$\tau=\rho^{-1}$, velocity field $(u_1,u_2,u_3)$, entropy $S$ and
hydrostatic pressure $p=p(\tau,S)$, while $(H_1,H_2,H_3)$ is the
magnetic field and $\mu_0$ is the magnetic constant~\cite{Li Qin}.  In
one-dimensional MHD, the first component $H_1$ of the magnetic field
is necessarily constant.

System (\ref{fullmhd}) simplifies significantly if we take the
constant $H_1$ to vanish, so the magnetic field is perpendicular to
the direction of motion.  In this case, the full system \eq{fullmhd}
can be written
\begin{align}
  \frac{\partial \tau }{\partial t} - 
     \frac{\partial u_1}{\partial x} &= 0,  \nn\\
  \frac{\partial u_1}{\partial t} +
     \frac{\partial \tilde{p}}{\partial x} &= 0,
\label{mhd}
\end{align}
regarded as a generalized $p$-system, and coupled with
\beq
  \frac{\partial \tilde{H}_j}{\partial t} = 0, \quad
  \frac{\partial {u_j}}{\partial t} = 0, \quad
  \frac{\partial S}{\partial t} = 0, \quad j = 1,2,
\label{mhdstat}
\eeq 
where we have written
\begin{gather*}
  \tilde H_j := \tau\, H_j   \com{and}
  \tilde{p} := p+\frac{1}{2}\,\mu_0\,
     \frac{{\tilde{H}}^2_2+{\tilde{H}}^2_3}{\tau^2}.
\end{gather*}

System (\ref{mhd}) is clearly an example of (\ref{gpsys}), so all
results in \cite{G4} apply directly.  Here we obtain a stronger
singularity formation result than Lemma \ref{mhd Thm singularity2} by
restricting our consideration to a polytropic ideal gas,
\[
  p = K\,e^{\frac{S}{c_\tau }}\,\tau^{-\gamma}, 
\]
with adiabatic gas constant $1<\gamma\le2$.

\begin{theorem}
\label{MHD_sing}
For system (\ref{mhd}), \eq{mhdstat}, assume that the initial data is
$C^2$ with uniform bounds
\[
  |G(\cdot,0)| < N_0, \quad 
  |G(\cdot,0)_x| < N_1, \com{and}
  |G(\cdot,0)_{xx}| < N_2
\]
for each of $G = S$, $H_2$, $H_3$.  Assume also that the
density is globally bounded,
\beq
  0 < N_{\rho L} < \rho(x,t) < N_{\rho U}, \com{for any}
  (x,t)\in\RR\times\RR^+,
\label{taubd}
\eeq
for constants $N_{\rho L}$ and $N_{\rho U}$.  There exists a positive
constant $\hatN$, depending on $N_j$ and $N_{\rho U}$, but not on
$N_{\rho L}$, such that, if
\[
   \inf\;\{\ y(\cdot,0),\  q(\cdot,0)\ \}<-\hatN,
\] 
then $u_x$ and/or $\tau_x$ blow up in finite time.  Here $y$ and $q$
are defined in (\ref{mhd_yq}), with $P=\tilde{p}$.
\end{theorem}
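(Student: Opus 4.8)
The plan is to recognize \eq{mhd}--\eq{mhdstat} as a special instance of the generalized $p$-system \eq{gpsys}: take $P(\tau,x)=\tilde p=Ke^{S(x)/c_\tau}\tau^{-\gamma}+\frac12\mu_0\bigl(\tilde H_2(x)^2+\tilde H_3(x)^2\bigr)\tau^{-2}$, where by \eq{mhdstat} the functions $S$, $\tilde H_2$, $\tilde H_3$ are stationary and fixed by the $C^2$ initial data. Since $\tilde p_\tau<0$ and $\tilde p_{\tau\tau}>0$, the hypotheses of Section~\ref{section_mhd} hold, so Lemmas~\ref{mhd them new ODEs} and~\ref{mhd Thm singularity2} apply: the gradient variables $y,q$ of \eq{mhd_yq} (with $P=\tilde p$) satisfy the Riccati equations \eq{mhdyqde} along characteristics, with $a_2=-c_h/(2\sqrt c)<0$. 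However, I would \emph{not} quote Lemma~\ref{mhd Thm singularity2} verbatim: its threshold $\tilde N$ depends on the full compact set $\mathcal K$, hence on the upper bound $1/N_{\rho L}$ for $\tau$. Instead I would re-run the blowup argument of the proof of Theorem~\ref{Thm singularity2} using sharper, $N_{\rho L}$-independent bounds on the coefficients.

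The core step is the coefficient estimate. Using $c^2=-\tilde p_\tau=\gamma Ke^{S/c_\tau}\tau^{-\gamma-1}+\mu_0 B\,\tau^{-3}$ with $B:=\tilde H_2^2+\tilde H_3^2$, together with $c_h=\tilde p_{\tau\tau}/(2c^2)$, the relations $P_\mu=-h_xc+P_x$ coming from \eq{mhd_htp}--\eq{mhd_f}, and the definition \eq{Idef} of $I$, I would write $a_0$, $a_1$, $a_2$ of \eq{mhd_a} explicitly as combinations of powers of $\tau$ whose coefficients are built from $S,B,S',S'',\tilde H_j',\tilde H_j''$; all of the latter are controlled by $N_0,N_1,N_2$ since $\tilde H_j=\tau H_j$ and $\rho=1/\tau$ is bounded by \eq{taubd}. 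The key claim is that there is a constant $\hatN$ depending only on $N_0,N_1,N_2,N_{\rho U}$ — and not on $N_{\rho L}$ — such that $a_0+a_1y+a_2y^2<0$ whenever $y<-\hatN$; this reduces to bounding the ratios $|a_1|/|a_2|$ and $|a_0|/|a_2|$ independently of the upper bound $1/N_{\rho L}$ on $\tau$. Tracking the $\tau$-exponents in these ratios (exactly as the exponents $\frac{3\gamma-1}{2(\gamma-1)}$ and $\frac{3\gamma-5}{3\gamma-1}$ enter \eq{Ndef} in the Euler case) one finds that they land in the favorable range precisely when $1<\gamma\le2$, so that the lower bound $1/N_{\rho U}$ for $\tau$ alone suffices. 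Separately, invoking the density lower bound in \eq{taubd}, I would check that $a_2\le-c_0<0$ along any characteristic lying in the region of $C^2$ existence — the same role the density lower bound plays in Theorem~\ref{Thm singularity2}.

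With these bounds the blowup argument mirrors the proof of Theorem~\ref{Thm singularity2}. Assuming $\inf\{y(\cdot,0),q(\cdot,0)\}<-\hatN$, choose $\eps>0$ and $x_0$ with $y(x_0,0)<-(1+\eps)\hatN$ and follow the forward characteristic through $(x_0,0)$: since $\pp y=a_0+a_1y+a_2y^2<0$ whenever $y<-\hatN$, the value $y$ stays below $-(1+\eps)\hatN$, and absorbing the lower-order terms into a fixed fraction of $a_2y^2$ gives $\pp y\le\theta\,a_2\,y^2<0$ for some $\theta\in(0,1)$. Then $\pp(1/y)=-\pp y/y^2\ge\theta|a_2|\ge\theta c_0>0$, so integrating along the characteristic (as in \eq{SS9 1}) yields $1/y(t)\ge 1/y(0)+\theta c_0\,t$, whose right side reaches $0$ in finite time because $1/y(0)<0$; hence $y\to-\infty$ in finite time. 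Since \eq{taubd} and the hypotheses keep $\tau$, $c$, $c_h$, $P_\mu$ bounded — so the alternative in Lemma~\ref{mhd Thm singularity2} that the state reaches $\partial\mathcal K$ cannot occur — the definition \eq{mhd_yq} of $y$ then forces $u_x$ and/or $\tau_x$ to blow up. The case $\inf q(\cdot,0)<-\hatN$ is identical along a backward characteristic. The main obstacle is the uniform-in-$N_{\rho L}$ coefficient estimate of the second paragraph: it is precisely there that the magnetic contribution to $\tilde p$ and the restriction $\gamma\le2$ are used, and it requires careful bookkeeping of the powers of $\tau$ appearing in $a_0$, $a_1$, $a_2$ and in $I$.
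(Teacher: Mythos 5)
Your proposal is correct and follows essentially the same route as the paper: reduce to an $N_{\rho L}$-independent bound on the roots of $a_0+a_1y+a_2y^2=0$ by writing $\tilde p=A(x)\tau^{-\gamma}+B(x)\tau^{-2}$ and tracking the powers of $\tau$ in the ratios $|a_1/a_2|$ and $|a_0/a_2|$ (equivalently, in the quantities $(\tilde p_\mu/c)_h$, $\tilde p_\mu/\sqrt c$, $c\sqrt c/c_h$, $I$, $I_\mu$), using $1<\gamma\le 2$ exactly where you say, and then rerunning the Riccati blowup argument of Theorem~\ref{Thm singularity2} with the lower density bound serving only to keep $a_2$ away from zero along the characteristic.
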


Although we assume the global bounds \eq{taubd}, $\hatN$ is
independent of the lower bound $N_{\rho L}$ of density.  Theorem
\ref{Thm_upper} for the compressible Euler equations indicates that it
is reasonable to expect an \emph{a priori} upper bound on the density,
and such a bound would imply that $\hatN$ depends only on the initial
data.

\begin{proof} 
As in the proof of Theorem \ref{Thm singularity2}, from \eq{mhdyqde},
it suffices to find uniform bounds for the roots of the quadratic
equations
\[
  a_0 + a_1\,y + a_2\,y^2 = 0 \com{and}
  a_0 - a_1\,q + a_2\,q^2 = 0,
\]
whose coefficients are given in (\ref{mhd_a}) with $P$ replaced by
$\tilde{p}$.  That is, we must choose $\hatN$ so that
\beq
  \left|\frac{a_1}{2\,a_2}\right|
  + \sqrt{\left|\frac{a_1}{2\,a_2}\right|^2 + \frac{a_0}{|a_2|}}
  \le \hatN,
\label{hatN}
\eeq
when $a_1^2-4 a_0 a_2\geq 0$, where we calculate
\[
   \left|\frac{a_1}{2\,a_2}\right| =
   \left|\frac{c\sqrt{c}}{c_h}\,
        \bigg(\frac{\tilde{p}_\mu}{c}\bigg)_{\!\!h} + I\right|,
\]
and 
\[
  \left|\frac{a_1}{2\,a_2}\right|^2 + \frac{a_0}{|a_2|}  =
  \left(\frac{c\sqrt{c}}{c_h}\,
        \bigg(\frac{\tilde{p}_\mu}{c}\bigg)_{\!\!h\;}\right)^{\!\!2}
      - 2\,\frac{c\sqrt{c}}{c_h}\,I_\mu
      + \frac{c}{c_h}\,
      \bigg(\frac{\tilde{p}_\mu}{c}\bigg)_{\!\!h}\,\tilde{p}_\mu.
\] 
It follows that we can choose $\hatN$ provided we find upper bounds
(independent of $N_{\rho L}$) for each of the quantities
\beq
  \left|\bigg(\frac{\tilde{p}_\mu }{c}\bigg)_{\!\!h}\right|, \quad
  \left|\frac{\tilde{p}_\mu}{\sqrt{c}}\right|,  \quad
  \left|\frac{c\sqrt{c}}{c_h}\right|, \quad
  |I|, \com{and} |I_\mu|;
\label{req}
\eeq
we treat each of these terms separately.

In order to simplify the calculation, we denote 
\[
  A(x) := K\,e^{\frac{S}{c_\tau }},\quad
  B(x) := \frac{1}{2}\,\mu_0\,(\tilde{H}^2_2+{\tilde{H}}^2_3),
\]
so that $A(x)$ and $B(x)$ are positive bounded functions, and $A(x)$
is bounded from below by a positive constant.  With this notation,
\[
  \tilde{p}(\tau,x)=A(x)\,\tau^{-\gamma} + B(x)\,\tau^{-2},
\]
the wave speed is
\beq
  c = \sqrt{-\tilde{p}_\tau }
    = \sqrt{\gamma\,A\,\tau^{-\gamma-1} + 2\,B\,\tau^{-3}}, 
\label{mhd_c_def}
\eeq 
and 
\[
  h = \int_\tau^\infty c\;d\tau
    = \int_\tau^\infty
      \sqrt{\gamma\,A\,\tau^{-\gamma-1} + 2\,B\,\tau^{-3}}\;d\tau.
\]
It follows by \eq{taubd} that $\tilde p$, $c$ and $h$ are bounded
above and below.  We write 
\[
  0 < N_{hL} < h < N_{hU},
\]
where $N_{hU}$ does not depend on $N_{\rho L}$.

{\bf 1.} We first consider $(\frac{\tilde{p}_\mu }{c})_h$. By
(\ref{mhd_htp}) we have $\tilde{p}_{\mu h}=c_\mu$, and by \eq{mhd_f},
\[
  c\,\left(\frac{\tilde{p}_\mu}{c}\right)_h
  = c_\mu - c_h\,\frac{\tilde{p}_\mu}{c}
  = c_{x} + \frac{c_\tau}{c^2}\,\tilde{p}_{x}
  = \frac{c}{2}\,\left(\frac{\tilde{p}_{x}}{\tilde{p}_\tau }\right)_\tau,
\]
and thus
\[
  \left(\frac{\tilde{p}_\mu}{c}\right)_h =
  \frac{1}{2}\,\left(\frac{\tilde{p}_{x}}{\tilde{p}_\tau}\right)_\tau
  = \frac{\tilde{p}_{x\tau}\,\tilde{p}_\tau 
          - \tilde{p}_x\,\tilde{p}_{\tau\tau}}{2\tilde{p}_\tau^2}.
\]
Now, since $1<\gamma\le 2$, we have
\[
  |\tilde p_\tau| 
  = \gamma\,A\,\tau^{-\gamma-1} + 2\,B\,\tau^{-3}
  \ge \gamma\,A\,\tau^{-\gamma-1},
\]
and since $A$ is bounded from below, $(\frac{\tilde{p}_\mu}{c})_h$ is
bounded provided
\[
  F_1 = (\tilde{p}_{x\tau}\,\tilde{p}_\tau 
          - \tilde{p}_x\,\tilde{p}_{\tau\tau})\,\tau^{2\gamma+2}
\]
is bounded.  It is routine to compute
\[
  F_1 = (\gamma A_x + 2B_x\tau^{\gamma-2})
        (\gamma A + 2B\tau^{\gamma-2})
        - (A_x + B_x\tau^{\gamma-2})
        (\gamma(\gamma+1)A + 6B\tau^{\gamma-2}),
\]
which is bounded independent of $N_{\rho L}$ for $\gamma \le 2$.

{\bf 2.}  By (\ref{mhd_f}),  
\[
  \frac{\tilde{p}_\mu}{\sqrt{c}}
  = \frac{\tilde{p}_x}{\sqrt{c}}
  + h_x\,\frac{\tilde p_\tau}{c\,\sqrt c}
  = \frac{\tilde{p}_x}{\sqrt{c}}
  - \sqrt{c}\,h_x,
\]
and this is bounded by a constant independent of $N_{\rho L}$, since
\[
  \frac{\tilde{p}_{x}}{\sqrt{c}} =
  \frac{A_x\,\tau^{-\gamma}+B_x\,\tau^{-2}}
    {(\gamma\,A\,\tau^{-\gamma-1}+2\,B\,\tau^{-3})^{\frac{1}{4}}}
  = O(1)\,\tau^{-\frac{3\gamma-1}4},
\]
and
\[
  h_x =\int_\tau^{\infty}
   \frac{\gamma A_x\,\tau^{-\frac{\gamma+1}{2}}
         + 2 B_x\,\tau^{\frac{\gamma-5}{2}}}
   {2\sqrt{\gamma A+2B\,\tau^{\gamma-2}}}\;d\tau 
   = O(1)\,\tau^{-\frac{\gamma-1}2},
\]
so $\sqrt c\,h_x = O(1)\,\tau^{-\frac{3\gamma-1}4}$, where $O(1)$ is
a bound depending on $x$ and the lower bound for $\tau$, and thus
is independent of $N_{\rho L}$.

{\bf 3.}  We similarly calculate
\[
  \frac{c\sqrt{c}}{c_h}
  = \frac{c\sqrt{c}}{c_\tau\,\tau_h}
  = -\frac{2\,c^\frac{7}{2}}{2\,c\,c_\tau}
  = \frac{2\,(\gamma A\tau^{-\gamma-1}+2B\tau^{-3})^\frac{7}{4}}
    {\gamma(\gamma+1)A\tau^{-\gamma-2} + 6B\tau^{-4}}
  = O(1)\,\tau^{-\frac{3\gamma-1}4},
\]
which is again bounded independent of $N_{\rho L}$.

{\bf 4.}  Since the upper bounds of $c$, $h$ and
$|(\frac{\tilde{p}_\mu}{c})_h|$ don't depend on $N_{\rho L}$, the
absolute value of $I$ is bounded above by a constant independent of
$N_{\rho L}$.

{\bf 5.} By \eq{Idef}, (\ref{mhd_f}), 
\[
   I_\mu = \int_{h_0}^h
      \left(\frac{1}{2}\,\sqrt{c}\,
      \left(\frac{\tilde{p}_\mu}{c}\right)_{\!\!h\;}\right)_{\!\!\mu}
    \;dh = \int_{h_0}^h
      \left[\frac{c_\mu}{4\sqrt{c}}
        \left(\frac{\tilde{p}_\mu}{c}\right)_{\!\!h\;} +
        \frac{\sqrt{c}}{2}\,\left(\frac{\tilde{p}_\mu }{c}
        \right)_{\!\!h\mu\;}\right]\;dh,
\]
so it suffices to bound $\frac{c_\mu}{\sqrt{c}}$ and
$\sqrt{c}(\frac{\tilde{p}_\mu }{c})_{h\mu }$. 
By (\ref{mhd_f}), 
\[ 
  \left|\frac{c_\mu}{\sqrt{c}}\right| \le
    \left|\frac{(c^2)_\tau\,h_x}{2c^2\sqrt{c}}\right|
    + \left|\frac{(c^2)_x}{2c\sqrt{c}}\right| \le
    O(1)\,\tau^{-\frac{\gamma+1}4},
\]
as above.  Finally, estimating as above, we obtain
\[
  \sqrt{c}\,\left(\frac{\tilde{p}_\mu}{c}\right)_{h\mu } =
  O(1)\,\tau^{-\frac{\gamma+1}4},
\] 
as required.

Since all terms in \eq{req} are uniformly bounded, it follows that we
can choose $\hatN$ so that \eq{hatN} holds.  Moreover, $\hatN$ is
independent of $N_{\rho L}$.  The remainder of the proof follows
that of Theorem~\ref{Thm singularity2}.
\end{proof}

\section{Compressible flow in a variable area duct}
\subsection{Equations and coordinates}

In this section, we first consider inviscid compressible flow in a
duct of varying cross section $a(x')$.  In Eulerian (spatial)
coordinates $(x',t')$, this flow is described by
\begin{align}
  a_{t'} &= 0,  \nn\\
  (a\,\rho)_{t'} + (a\,\rho\,u)_{x'} &= 0,  \nn\\
  (a\,\rho\,u)_{t'} + (a\,\rho\,u^2)_{x'} + a\,p_{x'} &= 0, 
\label{duct}\\\nn
  (a\,\rho\,E)_{t'} + (a\,\rho\,E\,u + a\,p\,u)_{x'} &= 0,
\end{align}
$a=a(x')$ is the cross-sectional area of the duct and
$E=e+\frac12\,u^2$ is the total energy~\cite{courant,Dafermos,Hong
  Temple}.  The others are the standard thermodynamic variables,
subject to \eq{2TD} as usual.  The first equation describes the fixed
duct, while the others represent conservation of mass, momentum and
energy, respectively.  For $C^1$ solutions, the energy equation is
equivalent to an entropy equation, 
\[
  S_{t'} + u\,S_{x'} = 0.
\]

We use Lagrangian (material) coordinates $(x,t)$, defined by
\[
  x = \int a(x')\,\rho(x')\;dx', \quad t = t',
\]
so that
\beq
  dx = a\,\rho\;dx' - a\,\rho\,u\;dt',  \com{and}
  dt = dt'.
\label{ductlag}
\eeq
We define a specific length by
\beq\label{duct v1}
  v := \frac{1}{a\rho},
\eeq 
and rewrite system \eq{duct} in the Lagrangian frame, to get
\begin{align}
  v_t - u_x &= 0,   \nn\\
  u_t + a\,p_x &= 0,   \label{lduct}\\
  S_t &= 0,    \nn
\end{align}
valid for $C^1$ solutions, which is a variant of the generalized
$p$-system \eq{gpsys}.

We restrict our attention to a polytropic ideal gas (\ref{introduction
  3}), so that
\[
  p = p(v,a,S) = K\,e^{\frac S{c_\tau}}\,a^{-\gamma}\,v^{-\gamma},
\]
and define the generalized Lagrangian wavespeed by
\[
  C = \sqrt{-a\,p_{v}}
    = \sqrt{K\gamma}\,a^{-\frac{\gamma-1}{2}}\,
    v^{-\frac{\gamma+1}{2}}\,e^ {\frac{S}{2c_\tau}}.
\]

As above, we define new variables $m$ and $z$ for $S$ and $v$ by
\[
   m = e^{\frac{S}{2c_\tau}}  \com{and}
   z = \int^\infty_{v}\frac{C\,a^{\frac{\gamma-1}{2}}}{m}\;dv =
       \frac{2\sqrt{K\gamma}}{\gamma-1}\,v^{-\frac{\gamma-1}{2}}.
\]
It follows that
\begin{gather}
  v = K_{\tau}\,z^{-\frac{2}{\gamma-1}}, \qquad
  p = K_p\,a^{-\gamma}\,m^2\,z^{\frac{2\gamma}{\gamma-1}},\nn\\
\com{and}
  C = C(m,z,a) = K_c\,
          a^{-\frac{\gamma-1}{2}}\,m\,z^{\frac{\gamma+1}{\gamma-1}},
\label{vpC}
\end{gather}
where the constants $K$ are given by (\ref{Kdefs}).  In these
coordinates, $C^1$ solutions of \eq{lduct} satisfy the equivalent
system
\begin{align}
  z_t + \frac{C}{m}\,a^{\frac{\gamma-1}{2}}\,u_x &= 0,  \nn\\
  u_t + m\,C\,a^{-\frac{\gamma-1}{2}}\,z_x
      + 2\,\frac{a\,p}{m}\,m_x - \gamma\,p\,a_x &= 0,
\label{duct zm}\\\nn
  m_t &= 0.
\end{align}

\subsection{Coupled Riccati equations}

Following~\cite{G3}, we define gradient variables by
\begin{align}
  \alpha &= u_x + a^{-\frac{\gamma-1}{2}}\,m\,z_x
     + {\TS\frac{\gamma-1}{\gamma}}\,a^{-\frac{\gamma-1}{2}}\,m_x\,z,
\nn\\\label{duct ab}
   \beta &= u_x - a^{-\frac{\gamma-1}{2}}\,m\,z_x
     - {\TS\frac{\gamma-1}{\gamma}}\,a^{-\frac{\gamma-1}{2}}\,m_x\,z,
\end{align}
and denote derivatives along forward and backward characteristics by
\[
   \partial_+ = \partial_t + C\,\partial_x \com{and}
   \partial_- = \partial_t - C\,\partial_x,
\]
respectively.

Although the area $a(x')$ of the duct is stationary in spatial
coordinates, it is moving in material coordinates.  We write
\[
  \dot{a} := \frac{d a(x')}{dx'}  \com{and}
  \ddot{a} := \frac{d^2 a(x')}{d{x'}^2},
\]
so that, according to (\ref{ductlag}), $a(x,t)$ satisfies
\beq
  a_t = u(x,t)\,\dot{a}(x'),  \quad
  a_x = v(x,t)\,\dot{a}(x')  \com{and}
  a_{xx} = v^2\,\ddot a + v_x\,\dot a,
\label{adot}
\eeq
for $x'=x'(x,t)$.

\begin{lemma}
For $C^1$ solutions of \eq{duct}, we have
\begin{align}
  \pp\alpha &= k_1\,\bigl(k_2\,(3\alpha+\beta)
     + (\alpha\,\beta-\alpha^2)\bigr)
     + k_3^+\,(\alpha-\beta) + F(x,t),
\nn\\\label{duct alpha}
  \pn\beta &= k_1\,\bigl(-k_2\,(\alpha+3\beta)
     + (\alpha\,\beta-\beta^2)\bigr)
     + k_3^-\,(\beta-\alpha) + F(x,t),
\end{align}
where
\begin{gather*}
  k_1 = {\TS\frac{\gamma+1}{2(\gamma-1)}}
     \,K_c\,z^{\frac{2}{\gamma-1}},
\qquad
  k_2 = {\TS\frac{\gamma-1}{\gamma(\gamma+1)}}
     \,m_x\,z\,a^{-\frac{\gamma-1}{2}},\\
  k_3^\pm = - {\TS\frac{\gamma-1}{4}}\,u\,a^{-1}\dot{a}
    \pm{\TS\frac{3(\gamma-1)^2}{8}}
     \,m\,z\,a^{-\frac{\gamma+1}{2}}\,\dot{a},
\end{gather*}
and where
\[
  F(x,t) = {\TS\frac{(\gamma-1)^3}{8K_c}}\,m^2\,
       z^{\frac{2\gamma-4}{\gamma-1}}\,a^{-\gamma-1}\,
      (a\,\ddot{a}-\gamma\,\dot{a}^2)+
     {\TS\frac{(\gamma-1)^2}{2\gamma}}
      \,m\,m_x\,z^2\,a^{-\gamma}\,\dot{a}.
\]
\end{lemma}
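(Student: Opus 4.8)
The plan is to derive the two Riccati-type equations in \eqref{duct alpha} directly by differentiating the evolution equations \eqref{duct zm} for $z$ and $u$ and assembling the results into the gradient variables $\alpha,\beta$ of \eqref{duct ab}. First I would record the commutator identities for the directional derivatives: since $\partial_+ = \partial_t + C\,\partial_x$ and $\partial_- = \partial_t - C\,\partial_x$, one has $\partial_+ \partial_x f - \partial_x \partial_+ f = -C_x\,f_x$ and the analogous identity for $\partial_-$, together with $\partial_+ \partial_- - \partial_- \partial_+$ acting appropriately; these are what convert $\partial_\pm(u_x)$ and $\partial_\pm(z_x)$ into expressions involving $u_{xt}, z_{xt}$ that can be eliminated using \eqref{duct zm}. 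Then I would compute $\partial_+(C\,z_x)$-type and $\partial_+ u_x$-type quantities, using \eqref{vpC} to express $C$, $p$, $v$ as explicit powers of $m$, $z$, $a$, and crucially using \eqref{adot} to handle all $x$- and $t$-derivatives of $a$ in terms of $\dot a$, $\ddot a$, $v$, $v_x$, $u$. Note that $m_x$ and $m_{xx}$ are treated as given data since $m_t=0$, so $\partial_\pm m = \pm C\,m_x$.

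Next, I would form the combination $\partial_+\alpha$. Writing $\alpha = u_x + a^{-(\gamma-1)/2} m\, z_x + \frac{\gamma-1}{\gamma} a^{-(\gamma-1)/2} m_x\, z$, the term $\partial_+ u_x$ contributes via $u_t = -m\,C\,a^{-(\gamma-1)/2} z_x - 2\frac{ap}{m}m_x + \gamma\, p\, a_x$, the term $\partial_+(a^{-(\gamma-1)/2} m\, z_x)$ splits by the product rule into a piece where the derivative hits $z_x$ (using $z_t = -\frac{C}{m}a^{(\gamma-1)/2} u_x$ and the commutator) and pieces where it hits $a^{-(\gamma-1)/2} m$. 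The genuinely nonlinear (quadratic) terms $\alpha\beta - \alpha^2$ will emerge from cross terms between $u_x$ and $m\,z_x$; the point is that $u_x = \frac{\alpha+\beta}{2}$ and $a^{-(\gamma-1)/2} m\, z_x = \frac{\alpha-\beta}{2} - \frac{\gamma-1}{\gamma}a^{-(\gamma-1)/2}m_x z$, so products of first derivatives reorganize into $\alpha,\beta$ plus lower-order data terms. The linear terms with coefficient $k_2$ arise from the $m_x$-dependent part of $\alpha$ interacting with the quadratic structure, the $k_3^\pm$ terms from the duct geometry ($\dot a \neq 0$), and the inhomogeneity $F(x,t)$ collects everything involving $\ddot a$, $\dot a^2$, and $m_x$ that is independent of $\alpha,\beta$ — i.e.\ the pure source arising from variable cross-section and entropy gradient. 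The backward equation for $\partial_-\beta$ follows by the symmetry $C \mapsto -C$, $\alpha \leftrightarrow \beta$ (with $k_3^+ \mapsto k_3^-$), so only one computation is genuinely needed.

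The main obstacle is purely the bookkeeping: correctly tracking the many terms generated when $\partial_\pm$ is applied to products of powers of $a(x',t)$, since by \eqref{adot} each such derivative produces both a "geometric" factor ($\dot a$ or $\ddot a$) and a factor of $v$, $v_x$, or $u$, and the $v_x$ terms must be re-expressed via $z_x$ (hence via $\alpha,\beta$) using $v = K_\tau z^{-2/(\gamma-1)}$. One must verify that all $u_x$-times-$u_x$, $u_x$-times-$z_x$, and $z_x$-times-$z_x$ products combine into exactly the displayed $k_1\bigl(k_2(3\alpha+\beta) + (\alpha\beta - \alpha^2)\bigr)$ form, that the coefficient $k_1 = \frac{\gamma+1}{2(\gamma-1)} K_c z^{2/(\gamma-1)}$ is the correct common factor (it should match the $a_2$-type coefficient from the isentropic/flat-duct case in Lemma \ref{them new ODEs} when $a$ is constant and $m$ is constant), and that the stray terms with no $\alpha$ or $\beta$ dependence assemble precisely into $F(x,t)$. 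A useful consistency check throughout is to set $a \equiv \text{const}$ (so $\dot a = \ddot a = 0$, $k_3^\pm = 0$, $F$ reduces to its $m_x$-term) and confirm the result degenerates to the known duct-free coupled Riccati system of \cite{G3}; and further setting $m$ constant should kill $k_2$ and $F$ entirely, recovering the decoupled Riccati ODEs. I would organize the computation around these limiting cases to localize any sign or constant errors.
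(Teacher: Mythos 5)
Your plan is essentially the paper's own proof: differentiate the definition of $\alpha$ along $\partial_+$, use the $x$-differentiated form of \eqref{duct zm} to eliminate the second derivatives $u_{xx}$, $z_{xt}$, $u_{xt}$, use \eqref{adot} to convert $a_t$, $a_x$, $a_{xx}$ into $\dot a$, $\ddot a$ times $u$, $v$, $v_x$, and then re-express $u_x$, $z_x$, $z_t$ through $\alpha$ and $\beta$ (the quadratic terms indeed coming from the $C_x$ products), with the $\beta$ equation obtained by symmetry. One small caveat in your consistency check: when $a$ is constant, \emph{both} terms of $F$ vanish (the $m_x$ term carries a factor $\dot a$), so $F$ does not reduce to an $m_x$-term; this does not affect the derivation itself.
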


We note that the coefficients $k_j$ and zero-order terms $F$ depend
only on the solution itself and on stationary derivatives of initial
entropy and duct shape.  It is reasonable to assume that $|\dot{a}|$,
$|\ddot{a}|$ and $|m_x|$ are uniformly bounded, so the coefficients
and inhomogeneity are essentially zero order terms.  As in all of the
simpler examples, the quadratic nonlinearity drives the growth of
$\alpha$ and/or $\beta$ to infinity, at which time derivatives blow up
and shocks form in the solution.

\begin{proof}
We derive the equation for $\alpha$.  Differentiating (\ref{duct ab}),
we get
\begin{align*}
  \pp\alpha = {}&
    \big[u_{xt} + C\,(a^{-\frac{\gamma-1}{2}}\,m\,z_x)_x\big] +
    \big[(a^{-\frac{\gamma-1}{2}}\,m\,z_x)_t + C\,u_{xx}\big]\\
&\qquad{}+{\TS\frac{\gamma-1}{\gamma}}
            \,\big[(a^{-\frac{\gamma-1}{2}}\,m_x\,z)_t +
            C\,(a^{-\frac{\gamma-1}{2}}\,m_x\,z)_x\big].
\end{align*}
Now differentiate (\ref{duct zm}) in $x$ to get
\begin{align*}
  (a^{-\frac{\gamma-1}{2}}\,m\, z_x)_t + C\,u_{xx} 
     &= - C_x\,u_x - (a^{-\frac{\gamma-1}{2}}\,m)_x\,z_t
        + ( a^{-\frac{\gamma-1}{2}} )_t\,m\,z_x,  \\
  u_{xt} + C\,(m\,a^{-\frac{\gamma-1}{2}}\,z_x)_x
     &= - ( 2\,\frac{a\,p}{m}\,m_x)_x
      +(\gamma\,p\,a_x)_x - C_x\,m\,a^{-\frac{\gamma-1}{2}}z_x,
\end{align*}
and substitute in to get
\begin{align}
  \pp\alpha &= - ( 2\,\frac{a\,p}{m}\,m_x)_x
      +(\gamma\,p\,a_x)_x - C_x\,m\,a^{-\frac{\gamma-1}{2}}z_x  \nn\\
     &\qquad{} - C_x\,u_x - (a^{-\frac{\gamma-1}{2}}\,m)_x\,z_t
        + ( a^{-\frac{\gamma-1}{2}} )_t\,m\,z_x  \label{dalpha}\\\nn
     &\qquad{} + {\TS\frac{\gamma-1}{\gamma}}
            \,\big[(a^{-\frac{\gamma-1}{2}}\,m_x\,z)_t +
            C\,(a^{-\frac{\gamma-1}{2}}\,m_x\,z)_x\big].
\end{align}
Note that we have eliminated all second derivatives except for
$a_{xx}$ and $m_{xx}$, since $m_{xt}=0$, and  we use \eq{adot} to
eliminate $a_t$, $a_x$ and $a_{xx}$.  The quadratic terms in \eq{duct
  alpha} are those involving $C_x$.

Finally, we express the derivatives $u_x$, $z_t$ and $z_x$ in terms of
$\alpha$ and $\beta$.  By (\ref{duct ab}), (\ref{duct zm}) and
(\ref{vpC}), we have
\begin{align*}
  u_x &= {\TS\frac{1}{2}}\,(\alpha+\beta), \\
  z_x &= {\TS\frac{a^{\frac{\gamma-1}{2} }}{2\,m}}\,(\alpha-\beta)
       - {\TS\frac{\gamma-1}{\gamma\,m}}\,z\,m_x, \\
  z_t &= - {\TS\frac{K_c}{2}}
       \,z^{\frac{\gamma+1}{\gamma-1}}\,(\alpha+\beta).
\end{align*}
Substituting these into \eq{dalpha} and simplifying yields \eq{duct
  alpha}.  We omit the details, which are similar to those appearing
in \cite{G3}.
\end{proof}

\subsection{Decoupled Riccati equations}

As in \cite{G3,G4}, equations \eq{duct alpha} can be rewritten as a
decoupled system with varying coefficients.  Since the $\alpha$
equation has terms linear in $\beta$, these can be eliminated by an
integrating factor.  By (\ref{vpC}), (\ref{duct zm}) and (\ref{duct
  ab}),
\[
  \pp z = z_t + C\,z_x = 
      - K_c\,z^{\frac{\gamma+1}{\gamma-1}}\,(\beta +
      {\TS\frac{\gamma-1}{\gamma}}\,m_x\,z\,a^{-\frac{\gamma-1}{2}}),
\]
so that
\[
  \beta = -\frac{1}{K_c}\,z^{-\frac{\gamma+1}{\gamma-1}}\,\pp z
  - {\TS\frac{\gamma-1}{\gamma}}\,m_x\,z\,a^{-\frac{\gamma-1}{2}}.
\]
We substitute for $\beta$ in (\ref{duct alpha}), move
terms including $\pp z$ to the left hand side, and multiply
by $z^{\frac{\gamma+1}{2(\gamma-1)}}$, to get
\begin{align}
\pp&(z^{\frac{\gamma+1}{2(\gamma-1)}}\,\alpha) +
{\TS\frac{1}{2\gamma}}m_x a^{-\frac{\gamma-1}{2}}z^{\frac{\gamma+1}
{2(\gamma-1)}}\partial_+ z-{\TS\frac{k_3^+}
{K_c}} z^{-\frac{\gamma+1}{2(\gamma-1)}} \partial_+ z \nn\\
 &= k_1\bigl(k_2(3\alpha-{\TS\frac{\gamma-1}{\gamma}}m_x z
a^{-\frac{\gamma-1}{2}})+(-{\TS\frac{\gamma-1}{\gamma}}m_x z
a^{-\frac{\gamma-1}{2}}\alpha-\alpha^2)\bigr)z^{\frac{\gamma+1}
{2(\gamma-1)}}
\nn\\\label{duct y eqn}
&\qquad{}+k_3^+(\alpha+{\TS\frac{\gamma-1}{\gamma}}m_x
z a^{-\frac{\gamma-1}{2}})
z^{\frac{\gamma+1}{2(\gamma-1)}}+F(x,t)
z^{\frac{\gamma+1}{2(\gamma-1)}}.
\end{align}

\begin{lemma}
There are variables $Y$ and $Q$, defined by
\begin{align*}
  Y &= z^{\frac{\gamma+1}{2(\gamma-1)}}\,\alpha 
     + \tilde Y (z,u,m,m_x,a,\dot a), \\
  Q &= z^{\frac{\gamma+1}{2(\gamma-1)}}\,\beta 
     + \tilde Q (z,u,m,m_x,a,\dot a),
\end{align*}
which satisfy the equations
\begin{align}
  \pp Y &= d_0 + d_1\,Y + d_2\,Y^2,  \nn\\
  \pn Q &= \bar d_0 - \bar d_1\,Q + d_2\,Q^2,
\label{YQeq}
\end{align}
with coefficients
\begin{align*}
  d_2 &= - {\TS\frac{\gamma+1}{\gamma-1}}\,K_c\,
	z^{\frac{3-\gamma}{2(\gamma-1)}},\\
  d_1&,\ \bar d_1 = d_1,\ \bar d_1(z,u,m,m_x,a,\dot a), \com{and}\\
  d_0&,\ \bar d_0 = d_0,\ \bar d_0(z,u,m,m_x,m_{xx},a,\dot a,\ddot a).
\end{align*}
\end{lemma}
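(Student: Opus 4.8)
The plan is to read off the $Y$-equation directly from the scalar identity \eq{duct y eqn} by absorbing its two $\pp z$-terms into a total forward derivative, and then to obtain the $Q$-equation by the mirror-image computation on the $\beta$-equation of \eq{duct alpha}.

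First I would record how the zero-order quantities $z,u,m,m_x,a,\dot a$ propagate along a forward characteristic. From \eq{duct zm}, \eq{vpC}, \eq{duct ab} and \eq{adot} one finds $\pp m=C\,m_x$, $\pp m_x=C\,m_{xx}$, $\pp a=(u+C\,v)\,\dot a$ and $\pp\dot a=(u+C\,v)\,\ddot a$, each a lower-order function of $z,u,m,m_x,m_{xx},a,\dot a,\ddot a$; and, combining $u_t$ from \eq{duct zm} with $u_x=\tfrac12(\alpha+\beta)$ and the relation $\beta=-K_c^{-1}z^{-\frac{\gamma+1}{\gamma-1}}\pp z-\tfrac{\gamma-1}{\gamma}m_x z\,a^{-\frac{\gamma-1}{2}}$ already used to form \eq{duct y eqn}, one gets $\pp u=C\,\beta+(\text{l.o.})=-a^{-\frac{\gamma-1}{2}}m\,\pp z+(\text{l.o.})$. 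Thus $\pp z$ re-enters the evolution of the zero-order data only through $\pp u$, with the fixed multiplier $-a^{-\frac{\gamma-1}{2}}m$.

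The heart of the argument is then to choose $\tilde Y=\tilde Y(z,u,m,m_x,a,\dot a)$ as a solution of the linear first-order PDE
\[
  \frac{\partial\tilde Y}{\partial z}-a^{-\frac{\gamma-1}{2}}m\,\frac{\partial\tilde Y}{\partial u}
  =\frac{1}{2\gamma}\,m_x\,a^{-\frac{\gamma-1}{2}}\,z^{\frac{\gamma+1}{2(\gamma-1)}}-\frac{k_3^+}{K_c}\,z^{-\frac{\gamma+1}{2(\gamma-1)}},
\]
whose right-hand side is exactly the coefficient of $\pp z$ appearing on the left of \eq{duct y eqn}, with $m,m_x,a,\dot a$ treated as parameters; this PDE is solved by characteristics, uniquely up to an irrelevant additive zero-order function. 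By the previous paragraph, $\pp\tilde Y$ then equals that coefficient times $\pp z$ plus lower-order terms, so that, setting $Y:=z^{\frac{\gamma+1}{2(\gamma-1)}}\alpha+\tilde Y$, the quantity $\pp Y=\pp(z^{\frac{\gamma+1}{2(\gamma-1)}}\alpha)+\pp\tilde Y$ equals the left side of \eq{duct y eqn} modulo lower-order terms, hence also the right side of \eq{duct y eqn} modulo lower-order terms. That right side is quadratic in $\alpha$, with sole quadratic term $-k_1\,\alpha^2\,z^{\frac{\gamma+1}{2(\gamma-1)}}$ (the potential $\alpha\,\pp z$ contribution from the bilinear term $k_1\,\alpha\,\beta$ having cancelled against the $\alpha\,\pp(z^{\frac{\gamma+1}{2(\gamma-1)}})$ term when \eq{duct y eqn} was formed), together with terms linear in $\alpha$ and terms of zero order. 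Substituting $\alpha=z^{-\frac{\gamma+1}{2(\gamma-1)}}(Y-\tilde Y)$ and collecting powers of $Y$ yields $\pp Y=d_0+d_1\,Y+d_2\,Y^2$ with
\[
  d_2=-k_1\,z^{-\frac{\gamma+1}{2(\gamma-1)}},
\]
which, by the formula for $k_1$, is the asserted negative multiple of $z^{\frac{3-\gamma}{2(\gamma-1)}}$; $d_1$ absorbs the terms linear in $\alpha$ together with the cross-term produced when $\alpha^2$ is re-expressed through $Y$ (a function of $z,u,m,m_x,a,\dot a$), and $d_0$ absorbs the remaining zero-order terms, which through $F$ and through $\pp m_x,\pp\dot a$ also involve $m_{xx}$ and $\ddot a$.

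The $Q$-equation follows symmetrically. Starting from the $\beta$-equation in \eq{duct alpha}, one eliminates $\alpha$ using the backward companion $\pn z=-K_c\,z^{\frac{\gamma+1}{\gamma-1}}(\alpha-\tfrac{\gamma-1}{\gamma}m_x z\,a^{-\frac{\gamma-1}{2}})$ of the identity used above for $\pp z$, multiplies by $z^{\frac{\gamma+1}{2(\gamma-1)}}$, and absorbs the resulting $\pn z$-terms into $\pn\tilde Q$ for a $\tilde Q(z,u,m,m_x,a,\dot a)$ chosen just as $\tilde Y$ but with $k_3^+$ replaced by $k_3^-$; the $+\leftrightarrow-$ asymmetry of $k_3^\pm$ is what produces $\bar d_1\ne d_1$ and $\bar d_0\ne d_0$, while $d_2$ is the same in both equations. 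I expect the real work --- beyond routine power-law bookkeeping with \eq{vpC} --- to lie in two places: checking that $\tilde Y$ (and $\tilde Q$) can be chosen consistently despite the mild implicitness caused by $\pp u$ itself carrying a $\pp z$-term, and verifying that the substitution for $\beta$ has been performed wherever $\beta$ occurred, so that no residual $\beta$ --- equivalently, no $Q$ --- survives in the $Y$-equation. This last point is exactly what makes \eq{YQeq} decouple, and is the step I would be most careful about.
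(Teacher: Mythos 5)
Your proposal is correct and follows essentially the same route as the paper: the paper realizes your transport equation for $\tilde Y$ concretely, by first writing the $\pp z$-terms of \eq{duct y eqn} as exact $z$-antiderivatives (treating $u$ as a parameter), then eliminating the resulting $\pp u$ via the momentum equation and absorbing the new $\pp z$-term as one further antiderivative, which produces the explicit three-term correction \eq{y_def}; your substitution $\alpha=z^{-\frac{\gamma+1}{2(\gamma-1)}}(Y-\tilde Y)$ and your identification of $-k_1\,\alpha^2\,z^{\frac{\gamma+1}{2(\gamma-1)}}$ as the sole surviving quadratic term match the paper exactly. (Minor note: your $d_2=-k_1\,z^{-\frac{\gamma+1}{2(\gamma-1)}}=-\frac{\gamma+1}{2(\gamma-1)}\,K_c\,z^{\frac{3-\gamma}{2(\gamma-1)}}$ is what actually follows from \eq{duct alpha}, so the coefficient $\frac{\gamma+1}{\gamma-1}$ in the lemma appears to carry a spurious factor of $2$ --- immaterial, since only the sign and boundedness of $d_2$ are used downstream.)
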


\begin{proof}
For convenience, we take $\gamma\neq 5/3$ or $3$.  Writing
\eq{duct y eqn} as a derivative along the characteristic, we get
\begin{align*}
\partial_+(&z^{\frac{\gamma+1}{2(\gamma-1)}}\alpha)+
\partial_+\bigl(\frac{\gamma-1}{\gamma(3\gamma-1)}m_x 
a^{-\frac{\gamma-1}{2}}z^{\frac{3\gamma-1}
{2(\gamma-1)}}\bigr)
\\
&\ -\frac{3(\gamma-1)^2 }
{8K_c }m\, \dot{a}\,a^{-\frac{\gamma+1}{2}}\, 
z^{\frac{\gamma-3}{2(\gamma-1)}} \partial_+ z
+\partial_+\bigl(\frac{(\gamma-1)^2}{2K_c (\gamma-3)}u\,a^{-1}\, 
\dot{a}\, z^{\frac{\gamma-3}{2(\gamma-1)}} \bigr)
\\
{}={}&\frac{(\gamma-1)^2}{2K_c (\gamma-3)}\,a^{-1}\, \dot{a}\, 
z^{\frac{\gamma-3}{2(\gamma-1)}} \partial_+ u
\\
&\ +\partial_+\bigl(\frac{\gamma-1}{\gamma(3\gamma-1)}m_x 
a^{-\frac{\gamma-1}{2}}\bigr)z^{\frac{3\gamma-1}{2(\gamma-1)}}
+\frac{(\gamma-1)^2}{2K_c (\gamma-3)}u \, z^{\frac{\gamma-3}{2(\gamma-1)}} 
\partial_+ (a^{-1}\, \dot{a})
\\
&\ +k_1\bigl(k_2(3\alpha-\frac{\gamma-1}{\gamma}m_x z
a^{-\frac{\gamma-1}{2}})+(-\frac{\gamma-1}{\gamma}m_x z
a^{-\frac{\gamma-1}{2}}\alpha-\alpha^2)\bigr)z^{\frac{\gamma+1}
{2(\gamma-1)}}
\\
&\ +k_3^+(\alpha+\frac{\gamma-1}{\gamma}m_x
z a^{-\frac{\gamma-1}{2}})
z^{\frac{\gamma+1}{2(\gamma-1)}}+F(x,t)z^{\frac{\gamma+1}
{2(\gamma-1)}}.
\end{align*}
Next, we eliminate the $\pp u$ term, which comes from differentiating
$k_3^+$.  By~(\ref{duct zm}), we have
\[
  \pp u = u_t+C u_x
   = -m a^{-\frac{\gamma-1}{2}}\partial_+ z-\TS{\frac{\gamma-1}{\gamma}}C\,
z\, m_x\, a^{-\frac{\gamma-1}{2}}+\gamma\, p\, a_x,
\]
and we again write this as a total derivative plus zero-order terms in
$z$ and $u$.

Finally, we define
\begin{align}
Y &= z^{\frac{\gamma+1}{2(\gamma-1)}}\,\alpha+\TS\frac{(\gamma-1)^2}{2 
K_c(\gamma-3)}\, u\, a^{-1}\,\dot{a}\,z^{\frac{\gamma-3}{2(\gamma-1)}}
+\TS\frac{\gamma-1}{\gamma(3\gamma-1)}\,m_x\, a^{-\frac{\gamma-1}{2}}\,
z^{\frac{3\gamma-1}{2(\gamma-1)}}\nn\\&
\qquad{}-\TS\frac{(3\gamma-13)(\gamma-1)^3}{4
K_c(\gamma-3)(3\gamma-5)}\,m\,\dot{a}\,a^{-\frac{\gamma+1}{2}}\,
z^{\frac{3\gamma-5}{2(\gamma-1)}},
\label{y_def}
\end{align}
and simplify, to get
\begin{align*}
\pp Y &= -z^{\frac{3\gamma-5}{2(\gamma-1)}}\partial_+
\bigl(\TS\frac{(3\gamma-13)(\gamma-1)^3}{4K_c(\gamma-3)
(3\gamma-5)}m\, \dot{a}\,a^{-\frac{\gamma+1}{2}}\bigr) \\&{}
+{}\TS{\frac{(\gamma-1)^2}{2K_c (\gamma-3)}\,a^{-1}\, \dot{a}\, 
z^{\frac{\gamma-3}{2(\gamma-1)}}\bigl(-\frac{\gamma-1}{\gamma}C\,
z\, m_x\, a^{-\frac{\gamma-1}{2}}+\gamma\, p\, a_x\bigr)}\\&{}
+{}\TS{\bigl(\frac{\gamma-1}{\gamma(3\gamma-1)}\,m_x\, 
a^{-\frac{\gamma-1}{2}}\bigr)z^{\frac{3\gamma-1}{2(\gamma-1)}}
+\frac{(\gamma-1)^2}{2K_c (\gamma-3)}u \, z^{\frac{\gamma-3}{2(\gamma-1)}} 
\partial_+ (a^{-1}\, \dot{a})}\\&{}
+{}k_1\TS{\bigl(k_2(3\alpha-\frac{\gamma-1}{\gamma}m_x z
a^{-\frac{\gamma-1}{2}})+(-\frac{\gamma-1}{\gamma}m_x z
a^{-\frac{\gamma-1}{2}}\alpha-\alpha^2)\bigr)z^{\frac{\gamma+1}
{2(\gamma-1)}}}\\&{}+{}\TS{
k_3^+(\alpha+\frac{\gamma-1}{\gamma}m_x
z a^{-\frac{\gamma-1}{2}})
z^{\frac{\gamma+1}{2(\gamma-1)}}+F(x,t)z^{\frac{\gamma+1}
{2(\gamma-1)}}}.
\end{align*}

Using \eq{y_def} to write
\[
  Y = z^{\frac{\gamma+1}{2(\gamma-1)}}\,\alpha + y_0, \com{so}
  \alpha = z^{-\frac{\gamma+1}{2(\gamma-1)}}\,Y 
         - z^{-\frac{\gamma+1}{2(\gamma-1)}}\,y_0,
\]
we eliminate $\alpha$ to get
\[
  \pp Y=d_0+d_1\,Y+d_2\,Y^2,
\]
with coefficients $d_i$ containing no derivatives of $z$, $u$, and in
particular,
\[
  d_2 = - \TS\frac{\gamma+1}{\gamma-1}\,K_c\,z^{\frac{3-\gamma}{2(\gamma-1)}}.
\]
Using $z^{\frac2{\gamma-1}} = K_\tau\,a\,\rho$, after a tedious
calculation we get, for $\gamma \ne 5/3$ or 3,
\begin{align*}
d_0 &= a^\frac{\gamma-3}{4}\big(\TS \frac{1}{\gamma}
K_c K_\tau^\frac{5\gamma+1}{4}(\TS\frac{\gamma-1}{3\gamma-1}m 
m_{xx}-\TS\frac{(3\gamma+1)(\gamma-1)}{(3\gamma-1)^2}m_x^2 )\,
a^2\, \rho^{\frac{5\gamma+1}{4}}\\
&\quad{}+L_1\,m_x\,\dot{a}\,u\,\rho^{\frac{3\gamma-1}{4}}
+L_2\,m\,m_x\,\dot{a}\,\rho^{\frac{5\gamma-3}{4}}\\
&\quad{}+L_3\,m\,\dot{a}^2\,a^{-2}\,u\,
\rho^{\frac{3\gamma-5}{4}}
+L_4\,m\,\ddot{a}\,a^{-1}\,u\,\rho^{\frac{3\gamma-5}{4}}
\\
&\quad{}+L_5\,m^2\,a^{-2}\,\dot{a}^2\,\rho^{\frac{5\gamma-7}{4}}
+L_6\,\ddot{a}\,a^{-1}\,u^2\,\rho^{\frac{\gamma-3}{4}}\\
&\quad{}+L_7\,a^{-2}\,\dot{a}^2\,u^2\,\rho^{\frac{\gamma-3}{4}}
+L_8\,m^2\,\ddot{a}\,a^{-1}\,\rho^{\frac{5\gamma-7}{4}}\big),\\
d_1 &= L_9\, m_x\, a\, \rho^{\frac{\gamma+1}{2}}+
L_{10}\,u\,a^{-1}\,\dot{a}+L_{11}\, m\, a^{-1}\, \dot{a}\, \rho^{\frac{\gamma-1}{2}},\\
d_2 &= -\TS\frac{\gamma+1}{\gamma-1}K_c\, K_\tau^\frac{3-\gamma}{4} 
\, a^\frac{3-\gamma}{4}\,\rho^{\frac{3-\gamma}{4}}<0,
\end{align*}
where the $L_j$ are constants depending only on $\gamma$.  

Similarly, if $Q$ is defined by
\begin{align*}
Q&=z^{\frac{\gamma+1}{2(\gamma-1)}}\,\alpha+\TS\frac{(\gamma-1)^2}{2 
K_c(\gamma-3)}\, u\, a^{-1}\,\dot{a}\,z^{\frac{\gamma-3}{2(\gamma-1)}}
-\TS\frac{\gamma-1}{\gamma(3\gamma-1)}\,m_x\, a^{-\frac{\gamma-1}{2}}\,
z^{\frac{3\gamma-1}{2(\gamma-1)}}
\\&\quad{}+{}\TS\frac{(3\gamma-13)(\gamma-1)^3}{4
K_c(\gamma-3)(3\gamma-5)}\,m\, \dot{a}\,a^{-\frac{\gamma+1}{2}}\,
z^{\frac{3\gamma-5}{2(\gamma-1)}},
\end{align*}
then it satisfies
\[
  \pn Q= {\bar d}_0+{\bar d}_1\, Q+d_2\, Q^2,
\]
with, for $\gamma\neq 5/3$ or $3$,
\begin{align*}
{\bar d}_0&=a^\frac{\gamma-3}{4}\big(\TS \frac{1}{\gamma}
K_c K_\tau^\frac{5\gamma+1}{4}(\TS\frac{\gamma-1}{3\gamma-1}m 
m_{xx}-\TS\frac{(3\gamma+1)(\gamma-1)}{(3\gamma-1)^2}m_x^2 )\,
a^2\, \rho^{\frac{5\gamma+1}{4}}\\
&\quad{}+\bar L_1\,m_x\,\dot{a}\,u\,\rho^{\frac{3\gamma-1}{4}}
+\bar L_2\,m\,m_x\,\dot{a}\,\rho^{\frac{5\gamma-3}{4}}\\
&\quad{}+\bar L_3\,m\,\dot{a}^2\,a^{-2}\,u\,
\rho^{\frac{3\gamma-5}{4}}
+\bar L_4\,m\,\ddot{a}\,a^{-1}\,u\,\rho^{\frac{3\gamma-5}{4}}
\\
&\quad{}+\bar L_5\,m^2\,a^{-2}\,\dot{a}^2\,\rho^{\frac{5\gamma-7}{4}}
+\bar L_6\,\ddot{a}\,a^{-1}\,u^2\,\rho^{\frac{\gamma-3}{4}}\\
&\quad{}+\bar L_7\,a^{-2}\,\dot{a}^2\,u^2\,\rho^{\frac{\gamma-3}{4}}
+\bar L_8\,m^2\,\ddot{a}\,a^{-1}\,\rho^{\frac{5\gamma-7}{4}}\big),\\
{\bar d}_1&=\bar L_9\, m_x\, a\, \rho^{\frac{\gamma+1}{2}}+
\bar L_{10}\,u\,a^{-1}\,\dot{a}+\bar L_{11}\, m\, a^{-1}\, \dot{a}\,
\rho^{\frac{\gamma-1}{2}},
\end{align*}
and $\bar L_j$ are constants depending only on $\gamma$.  It is clear
that similar equations hold for $\gamma=5/3$ or 3.
\end{proof}

\subsection{Singularity formation}

Having obtained Riccati equations \eq{YQeq} for derivative variables,
we obtain a gradient blowup result as in Theorem \ref{Thm
  singularity2}.  The result states that a strong enough compressive
wave will eventually form a shock, provided the state variables remain
finite.  Theorem~\ref{Thm_upper} indicates that state variables should
indeed remain bounded for all times, although a vacuum can form in
infinite time~\cite{YpRP2,G1}.

\begin{cor}
\label{duct Thm singularity2}
Assume there exist some positive constants $M_i$, such that  
the duct and initial data satisfy
\begin{align*}
  M_1<a(x,0)<M_2,\quad& |\dot{a}(x,0)|<M_3,\quad |\ddot{a}(x,0)|<M_4,\\
  M_5<m(x,0)<M_6,\quad& |m_x(x,0)|<M_7,\quad |m_{xx}(x,0)|<M_8,
\end{align*}
while the solutions satisfy the \emph{a priori} bounds
\[
  |u(x,t)|<M_{9}, \quad \rho(x,t)<M_{10},
\]
and $\rho(x,t)>M_{11}$ if $1<\gamma \leq 3$, for 
$(x,t)\in \RR\times\RR^+$.  Then there exists a positive constant
$\bar{N}$, depending on $\gamma$ and $M_i$, such that, if
\[
  Y < -\bar{N}  \com{or}  Q < -\bar N
\]
somewhere in the initial data, then a singularity forms in finite time.
\end{cor}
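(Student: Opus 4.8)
The plan is to mimic the proof of Theorem~\ref{Thm singularity2} exactly, now using the decoupled Riccati system \eqref{YQeq} in place of \eqref{yq odes}. The extra ingredient compared to the Euler case is that \eqref{YQeq} has a genuine linear term $d_1\,Y$ (resp.\ $-\bar d_1\,Q$), so I cannot simply argue ``$\pp Y<0$ hence $Y$ stays negative.'' Instead I would combine the linear and quadratic terms: for $Y$ very negative, $d_0+d_1\,Y+d_2\,Y^2$ is dominated by $d_2\,Y^2<0$, so the quadratic wins. The first step is therefore to produce a uniform constant $\bar N$ such that $Y<-\bar N$ forces $d_0+d_1\,Y+d_2\,Y^2<0$, and moreover forces $d_0+d_1\,Y+\frac12\,d_2\,Y^2<0$ (say), giving room for a Gronwall-type comparison. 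This is where the hypotheses enter: the a priori bounds $M_1<a<M_2$, $|u|<M_9$, $\rho<M_{10}$ (and $\rho>M_{11}$ when $1<\gamma\le3$), together with the bounds on $\dot a,\ddot a,m,m_x,m_{xx}$, make $d_0$, $d_1$ bounded and $d_2$ bounded away from $0$ and $-\infty$ along the relevant characteristic, exactly as $a_0,a_2$ were controlled in Theorem~\ref{Thm singularity2} via the $E_U$ bound coming from Theorem~\ref{Thm_upper}. (Here I would remark that, unlike the Euler case, there is no self-contained $L^\infty$ bound, so the a priori bounds must be assumed — which is precisely what the corollary does.)

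Concretely, I would set $\bar N$ so that for $Y\le -\bar N$ one has both $d_2\,Y^2 \le -2|d_1|\,|Y| - 2\,d_0$ uniformly, so that $d_0 + d_1 Y + \tfrac12 d_2 Y^2 \le 0$. Then, starting from a point $(x_0,0)$ with $Y(x_0,0)<-(1+\eps)\bar N$ and following the forward characteristic, I claim $\pp Y<\tfrac12 d_2\,Y^2<0$ as long as $Y\le -\bar N$; since $Y$ is then decreasing it never leaves the region $Y\le -\bar N$, so the differential inequality $\pp Y\le \tfrac12 d_2\,Y^2$ holds for all time the solution exists. Dividing by $Y^2$ and integrating along the characteristic gives
\[
  \frac{1}{Y(t)} \ge \frac{1}{Y(0)} - \frac12\fint_0^t d_2\;dt,
\]
and since $d_2 = -\tfrac{\gamma+1}{\gamma-1}K_c\,K_\tau^{(3-\gamma)/4}\,a^{(3-\gamma)/4}\,\rho^{(3-\gamma)/4}$ is, under the assumed bounds, negative and bounded away from $0$ (using $\rho>M_{11}$ when $3-\gamma>0$, i.e.\ $\gamma<3$, and $\rho<M_{10}$ when $\gamma\ge3$), the right-hand side reaches $0$ in finite time $T_*$. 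Hence $Y\to-\infty$ in finite time; by \eqref{y_def}, $Y$ differs from $z^{(\gamma+1)/2(\gamma-1)}\alpha$ by a bounded quantity and $z$ is bounded above and below, so $\alpha\to-\infty$, and then as in \eqref{duct ab} this forces $|u_x|$ and/or $|z_x|$ (equivalently $|\tau_x|$) to blow up. The argument for $Q<-\bar N$ along a backward characteristic is identical, using the $\pn Q = \bar d_0 - \bar d_1 Q + d_2 Q^2$ equation.

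The main obstacle is the bookkeeping in Step~1: one must verify that every term of $d_0$ and $d_1$ in the displayed formulas is genuinely bounded by the assumed $M_i$, with particular attention to the powers of $\rho$ with negative exponents ($\rho^{(3\gamma-5)/4}$, $\rho^{(5\gamma-7)/4}$, $\rho^{(\gamma-3)/4}$, etc.), which are the ones that require the lower bound $M_{11}$ on $\rho$ in the range $1<\gamma\le3$ — this is why the statement carries that hypothesis — and to checking that the sign structure of $d_2$ behaves as claimed across the threshold $\gamma=3$. Once the uniform bound $\bar N$ on the larger root of the relevant quadratic is in hand, the rest is the now-standard Riccati comparison and is essentially word-for-word the proof of Theorem~\ref{Thm singularity2}. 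I would also note the minor points that the cases $\gamma=5/3,3$ are handled by the ``similar equations hold'' remark in the preceding lemma, and that $z$ being bounded above and below follows from the a priori bounds on $\rho$ via $z^{2/(\gamma-1)}=K_\tau\,a\,\rho$.
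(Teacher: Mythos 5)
Your proposal is correct and follows essentially the same route as the paper: the paper likewise uses the assumed \emph{a priori} bounds on $u$, $\rho$, $a$, $m$ and their derivatives to bound the roots of the quadratics $d_0+d_1Y+d_2Y^2$ and $\bar d_0-\bar d_1Q+d_2Q^2$ (with the lower bound $M_{11}$ on $\rho$ entering exactly where you identify, through the negative powers of $\rho$ in $d_0,d_1$ and through keeping $|d_2|$ bounded away from zero when $\gamma<3$), and then invokes the Riccati comparison of Theorem~\ref{Thm singularity2}. Your explicit absorption of the linear term into half of the quadratic term is simply a spelled-out version of what the paper compresses into ``the proof follows in the same way as Theorem~\ref{Thm singularity2}.''
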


\begin{proof}
The roots of (\ref{YQeq}), if they exist, are given by the quadratic
formula
\[
  \frac{-d_1\pm\sqrt{d^2_1-4\,d_0\,d_2}}{2\,d_2} \com{or}
  \frac{\bar d_1\pm\sqrt{{\bar d}_1^2-4\,\bar d_0\,d_2}}{2\,d_2},
\]
which have bounds depending on the ratios $|d_1|/|d_2|$, $|d_0|/|d_2|$,
$|\bar d_1|/|d_2|$ and $|\bar d_0|/|d_2|$.  It follows that these
roots have a lower bound $-\bar N$ depending on the $M_i$, and the
proof follows in the same way as Theorem~\ref{Thm singularity2}.
\end{proof}

\subsection{Spherically symmetric Euler equations}

The equations for compressible flow in a duct include spherically
symmetric three-dimensional Euler flow as a special case, on the
exterior of a ball.  Using $r=|x'|$ as the radial coordinate, the
Euler equations reduce to the duct equations with $x$ replaced by $r$
and $a(r)=r^2$.  

In order to get cleaner equations, we consider the isentropic case,
with $m=1$, say, and $\gamma\neq 5/3$ or $3$.  With these assumptions,
$m_x=0$, $\dot a = 2\,r$ and $\ddot a = 2$, and (\ref{YQeq}) hold with
coefficients
\begin{align*}
d_0&=r^{\frac{\gamma-3}2-2}\, \rho^{\frac{\gamma-3}{4}}\big(G_1\,
u\, \rho^{\frac{\gamma-1}{2}} +G_2\, \rho^{\gamma-1} 
+G_3\, u^2 \big)\\ 
d_1&=r^{-1}\big(G_4\, u + G_5\,
\rho^{\frac{\gamma-1}{2}}\big)\\ 
d_2&=-\TS\frac{\gamma+1}{\gamma-1}K_c\,
K_\tau^\frac{3-\gamma}{4}\,
r^\frac{3-\gamma}{2}\,\rho^{\frac{3-\gamma}{4}},
\end{align*}
for constants $G_i$.  Thus the ratios are
\begin{align*}
  \frac{d_1}{d_2} &= r^{\frac{\gamma-5}2}\,\rho^{\frac{\gamma-3}{4}}\,
    (\hat G_4\, u + \hat G_5\,
\rho^{\frac{\gamma-1}{2}}) \\
  \frac{d_0}{d_2} &= r^{\gamma-5}\,\rho^{\frac{\gamma-3}{2}}\,
    (\hat G_1\,u\, \rho^{\frac{\gamma-1}{2}} +\hat G_2\,
    \rho^{\gamma-1}+\hat G_3\, u^2),
\end{align*}
for constants $\hat G_i$.  It follows that the roots
$\beta_\pm = \frac{-d_1\pm\sqrt{d^2_1-4d_0 d_2}}{2d_2}$ of (\ref{YQeq})
satisfy the bound
\[
  \beta_\pm \ge - r^{\frac{\gamma-5}2}\,\rho^{\frac{\gamma-3}{4}}\,(
        \hat G_6\,|u| + \hat G_7\,\rho^{\frac{\gamma-1}{2}}),
\]
for constants $\hat G_j$ depending only on the equation of state.

A gradient blowup result similar to Theorem~\ref{Thm singularity2} now
follows exactly as before for the {\em exterior} problem when
$1<\gamma<5$.

\end{document}